\definecolor{myblue}{RGB}{83,118,182}
\newtheorem{thm}{Theorem}[section]
\newtheorem{cor}[thm]{Corollary}
\newtheorem{prop}[thm]{Proposition}
\newtheorem{defin}[thm]{Definition}
\newtheorem{rmk}[thm]{Remark}
\newtheorem{ex}[thm]{Example}
\def\bQ{{\mathbb Q}}
\def\bZ{{\mathbb Z}}
\def\zsh{{g_{sh}^0(K)}}
\def\gs{{g_4(K)}}
\def\sC{{\mathscr{C}}}
\begin{document}
\bibliographystyle{alpha}

\title[]%
{Shake genus and slice genus}

\author{Lisa Piccirillo}
\address{Department of Mathematics, University of Texas, Austin, TX 78712}
\email{lpiccirillo@math.utexas.edu}

\maketitle

\begin{abstract}
An important difference between high dimensional smooth manifolds and smooth 4-manifolds that in a 4-manifold it is not always possible to represent every middle dimensional homology class with a smoothly embedded sphere. This is true even among the simplest 4-manifolds: $X_0(K)$ obtained by attaching an $0$-framed 2-handle to the 4-ball along a knot $K$ in $S^3$. The $0$-shake genus of $K$ records the minimal genus among all smooth embedded surfaces representing a generator of the second homology of $X_0(K)$ and is clearly bounded above by the slice genus of $K$. We prove that slice genus is not an invariant of $X_0(K)$, and thereby provide infinitely many examples of knots with $0$-shake genus strictly less than slice genus. This resolves Problem 1.41 of \cite{Kir97}. As corollaries we show that Rasmussen's $s$ invariant is not a $0$-trace invariant and we give examples, via the satellite operation, of bijective maps on the smooth concordance group which fix the identity but do not preserve slice genus.  These corollaries resolve some questions from \cite{4MKC16}.
\end{abstract}

\section{Introduction}
One of the key differences between smooth 4-manifolds and higher dimensional smooth manifolds is the ability to represent any middle dimensional homology class with a smoothly embedded sphere. For 4-manifolds this is not always possible even in the simplest case: four-manifolds $X_n(K)$ obtained by attaching an $n$-framed 2-handle to the 4-ball along a knot $K$ in $S^3$. We call such a manifold the $n$-trace of $K$. The $n$-shake genus of $K$, denoted $g_{sh}^n(K)$, measures this failure to find a sphere representative by recording the minimal genus among smooth embedded generators of the second homology of $X_n(K)$. 

Recall that the slice genus of $K$, denoted $g_4(K)$, is defined to be the minimal genus of a smooth properly embedded surface $\Sigma\hookrightarrow B^4$ such that $\partial\Sigma=K\subset S^3$. When we attach a 2-handle to $B^4$ along $K$, any such $\Sigma$ can be capped off to a closed surface $\widehat{\Sigma}$ of the same genus. So we see that for all integers $n$ and knots $K$ the $n$-shake genus is bounded above by the slice genus. Since $\widehat{\Sigma}$ is embedded in a restrictive manner ($\widehat{\Sigma}$ intersects the cocore of the handle in one point) one might expect that the $n$-shake genus can be strictly less than the slice genus. Indeed for $n\neq 0$ such examples are well-known \cite{Akb77,Lic79,Akb93,AJOT13,CR16}. All of these examples rely on the same proof technique: produce two knots $K$ and $K'$ with $X_n(K)$ diffeomorphic to $X_n(K')$, then show that $g_4(K)\neq g_4(K')$. This paper concerns the case $n=0$.

There are a few issues with using the $n\neq 0$ proof outline to show that there exist $K$ such that $\zsh <\gs$. A longstanding conjecture of Akbulut and Kirby, Problem 1.19 of \cite{Kir97}, held that if the 3-manifolds $S^3_0(K)$ and $S^3_0(K')$ obtained by 0-framed Dehn surgery on knots $K$ and $K'$ are homeomorphic, then $K$ and $K'$ are (smoothly) concordant.  Since $S^3_0(K)$ arises naturally as $\partial X_0(K)$, any attempt to use the $n\neq0$ argument to show  there exist $K$ such that $\zsh <\gs$ must disprove this conjecture along the way. It is a priori possible to show that there exist knots $K$ with $\zsh <\gs$ without exhibiting a counterexample to the Akbulut Kirby conjecture but to do so one would need to demonstrate that there exists a knot $K$ with $\zsh <\gs$ and so that for any smooth embedded minimal genus surface $\Gamma$ generating $H_2(X_0(K))$ there is no description of $X_0(K)$ consisting of only a $0$ and $2$ handle in which $\Gamma$ can be isotoped to meet the cocore of the $2$ handle transversely in a point. The existence of such a knot seems to be a subtle problem; it is not resolved here. 

In 2015 Yasui disproved the Akbulut-Kirby conjecture \cite{Yas15a}, and it was later demonstrated by A. N. Miller and the author that there exist non-concordant knots with diffeomorphic $0$-traces \cite{MP17}. These results give some evidence that that it might in fact be possible to show that there exist $K$ with $\zsh <\gs$ by producing knots $K$ and $K'$ with $X_0(K)$ diffeomorphic to $X_0(K')$ and $g_4(K)\neq g_4(K')$.

Some concerns with that outline remain; first, there is a brief classical argument, see \cite{KM78}, showing that if a knot $K$ shares a 0-trace with a slice knot $K'$ then $K$ is slice. As such, the standard outline cannot be used to show that there exist 0-shake slice knots which are not slice; the existence of such knots remains open. Second, the $0$-surgery of a knot, and hence the $0$-trace, determines fundamental slice genus bounds such as the Tristram-Levine signatures, Casson-Gordon signatures, and signature invariants associated to the filtration of \cite{COT03}. In fact  there was only one smooth concordance invariant known not to be a $0$-trace invariant, and that invariant does not give slice genus bounds, see \cite{MP17}. Nevertheless, we have

\begin{thm} \label{Thm:genusgap}There exist infinitely many pairs of knots $K$ and $K'$ such that $X_0(K)$ is diffeomorphic to $X_0(K')$ and $\gs\neq g_4(K')$.
\end{thm}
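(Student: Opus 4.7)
The plan has two components: construct infinite families of pairs of knots sharing a 0-trace, then distinguish their slice genera by an obstruction that is not determined by the 0-trace.

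For the first component, I would construct, for each $n$, a two-component link $L_n = K_n \cup c_n$ in $S^3$ with $c_n$ unknotted and $\mathrm{lk}(K_n, c_n) = 0$. Enlarging the 0-trace $X_0(K_n)$ by a 2-handle on $c_n$ yields a 4-manifold whose Kirby diagram admits, under favorable geometric conditions on $L_n$, a second handle description exhibiting it as $X_0(K_n')$ for a different knot $K_n'$ in $S^3$; equivalently, a sequence of handleslides and a blowdown of $c_n$ converts the 0-framed 2-handle on $K_n$ into a 0-framed 2-handle on $K_n'$. The existence of non-concordant knots with diffeomorphic 0-traces in \cite{MP17} shows this machinery is nontrivial. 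I would either use such examples directly, or satellite them with a varying companion, to produce an infinite family of pairs.

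For the second component, I would bound $g_4$ below using Rasmussen's $s$-invariant, with $|s(K)|/2 \leq \gs$. As the introduction notes, the Tristram-Levine, Casson-Gordon, and Cochran-Orr-Teichner $L^2$-signature obstructions are all pulled back from $S^3_0(K) = \partial X_0(K)$ and thus agree on the two members of a 0-trace pair, so they cannot help. Rasmussen's $s$, by contrast, is not a priori a 0-trace invariant, so I would choose the construction so that $s(K_n) \neq s(K_n')$. This forces $g_4(K_n) \neq g_4(K_n')$ and, once the pairs are arranged to be pairwise distinct, proves Theorem \ref{Thm:genusgap}. As a bonus, such a construction also demonstrates that $s$ is not a 0-trace invariant, which is the corollary advertised in the abstract.

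The main obstacle is producing a concrete link $L_n$ realizing both the diffeomorphism of 0-traces and a nontrivial $s$-gap. One knot in each pair should admit an explicit low-genus surface furnishing an upper bound on $g_4$, while the other's $s$-invariant must be computed or lower-bounded despite the absence of clean satellite formulas for $s$. Standard tools here include quasipositivity, Bar-Natan's divisibility bounds, and direct computation on small diagrams, which can then be satellited with a range of companions to generate the infinite family. Finally, to confirm the family is genuinely infinite rather than one pair wearing many masks, I would verify that a classical invariant such as the Alexander polynomial or a Tristram-Levine signature varies across the pairs.
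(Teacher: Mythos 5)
Your overall strategy---produce pairs with diffeomorphic $0$-traces by handle calculus, then separate the slice genera with Rasmussen's $s$, which unlike the signature-type invariants is not pulled back from $S^3_0(K)$---is exactly the paper's, and your remark that one knot in each pair needs an explicit low-genus surface is on target. But the construction you sketch has two concrete problems. First, the handle mechanism as stated does not produce a knot trace: attaching a second $2$-handle along a $0$-framed unknot $c_n$ raises $b_2$ to $2$, and a $0$-framed unknot cannot be blown down. What is actually needed is a \emph{cancelling} $1$-handle: the paper's input is a three-component link $R\cup B\cup G$ with $R$ a dotted circle and $B,G$ two $0$-framed $2$-handles, each of $B$ and $G$ geometrically dual to $R$, so that cancelling $R$ against either one exhibits the same $4$-manifold as $X_0(K)$ or as $X_0(K')$ with $b_2=1$ throughout. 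Second, and more seriously, you have no mechanism for the required \emph{upper} bound on $g_4$ of one member of the pair: both knots coming out of such a diffeomorphism are a priori complicated, and the \cite{MP17} examples cannot be used ``directly'' since they were distinguished there by an invariant that gives no slice genus bound. The paper's key new ingredient is a concordance statement (its Theorem 2.3): if additionally $G\sim U$ and $B\cup G$ is split, then the knot $K$ obtained by cancelling against $G$ is \emph{concordant to $B$}, so one may take $B$ to be, say, a trefoil and know $g_4(K)=1$ exactly, while leaving $K'$ free to be complicated.

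Your method for getting infinitely many pairs is also unjustified: satelliting a pair $K,K'$ with $X_0(K)\cong X_0(K')$ by a varying companion (or pattern) is not known to preserve the property of having diffeomorphic $0$-traces, and no argument is offered. The paper instead builds a twist family $L_m$ of links satisfying the hypotheses above, computes $Kh$ of the single knot $K'_0$ ($\approx 40$ crossings, via JavaKh) to get $s(K'_0)=4$ from the support property of $Kh^{0,s\pm1}$, and then propagates this to all $m\le 0$ using the crossing-change monotonicity $s(K_-)\le s(K_+)\le s(K_-)+2$; combined with $g_4(K_m)=1$ from the concordance statement this yields $g_4(K'_m)\ge 2>g_4(K_m)$ for the whole family. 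You would need to supply substitutes for both the concordance control and the propagation step before your outline closes.
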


\begin{cor}\label{cor:shakenotslice}
There exist infinitely many knots $K$ with $\zsh <\gs$.
\end{cor}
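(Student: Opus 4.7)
The plan is to derive Corollary \ref{cor:shakenotslice} as an essentially immediate consequence of Theorem \ref{Thm:genusgap}. The key observation is that $g_{sh}^0$ is, by its very definition, an invariant of the diffeomorphism type of the $0$-trace: it records the minimum genus among smoothly embedded surfaces representing a generator of $H_2(X_0(K))$, and any diffeomorphism $X_0(K)\cong X_0(K')$ pushes such a surface forward to a surface of the same genus generating $H_2(X_0(K'))$ (the generator is determined up to sign). Hence for every pair $(K,K')$ produced by Theorem \ref{Thm:genusgap} one has $g_{sh}^0(K)=g_{sh}^0(K')$.

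For such a pair, relabel so that $g_4(K)>g_4(K')$. By the cap-off construction recalled in the introduction, a minimal genus slice surface $\Sigma'\hookrightarrow B^4$ for $K'$ can be capped off by the core of the $2$-handle of $X_0(K')$ to a smoothly embedded closed surface $\widehat{\Sigma'}\subset X_0(K')$ generating $H_2(X_0(K'))$ and of the same genus $g_4(K')$. Consequently $g_{sh}^0(K')\le g_4(K')$, and chaining the inequalities yields
\[
\zsh \;=\; g_{sh}^0(K') \;\le\; g_4(K') \;<\; g_4(K),
\]
so this $K$ satisfies $\zsh<\gs$.

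To produce infinitely many distinct such knots $K$, I would apply this argument to the infinite family of pairs furnished by Theorem \ref{Thm:genusgap}, selecting from each pair the knot with strictly larger slice genus. The only cosmetic point to check is that this selection yields infinitely many distinct knots rather than collapsing to finitely many; this follows automatically, since if only finitely many knots appeared as the ``larger genus'' choice, then only finitely many pairs $(K,K')$ could arise in total, contradicting Theorem \ref{Thm:genusgap}. There is no real obstacle beyond Theorem \ref{Thm:genusgap} itself; the corollary is a direct unpacking of definitions together with the standard slice-to-trace cap-off.
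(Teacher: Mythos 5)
Your argument is correct and is exactly the paper's (implicit) reasoning: $g_{sh}^0$ is by definition an invariant of the diffeomorphism type of the $0$-trace and is bounded above by $g_4$ via the cap-off, so in each pair from Theorem \ref{Thm:genusgap} the knot of larger slice genus satisfies $\zsh<\gs$. The only slip is your final ``automatic'' distinctness claim: it is not true that finitely many larger-genus knots would force finitely many pairs, since a single knot could in principle be the larger-genus member of infinitely many pairs; what one actually uses is that the construction behind Theorem \ref{Thm:genusgap} produces an infinite family of distinct knots $K_m'$ with $g_4(K_m')=2>1=g_4(K_m)$, a point the paper likewise leaves implicit.
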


This solves problem 1.41 of \cite{Kir97}.

The constructive half of the proof of Theorem \ref{Thm:genusgap} relies on a reinterpretation of a classical technique of Lickorish \cite{Lic79} and Brakes \cite{Bra80} to produce $K$ and $K'$ with $X_0(K)$ diffeomorphic to $X_0(K')$. Lickorish and Brakes' technique can be used to produce all of the knots $K'$ that have been used to show $g_{sh}^{n}(K')<g_4(K')$ for $n\neq 0$ in the literature. However, all the knots $K, K'$ considered in any of the $n\neq 0$ literature have $g_4(K)\le 1$ and $g_4(K')\le 1$ and as such cannot be used to prove Theorem \ref{Thm:genusgap}.

With our new perspective (Theorem \ref{thm:diffeotraces}) we produce pairs of knots $K, K'$ with diffeomorphic 0-traces which a priori should be expected to have large  slice genus. In Theorem \ref{thm:tracesconc} we then give a more restrictive construction allowing us to build such a pair where $K$, surprisingly, has some prescribed small slice genus. This second construction is non-symmetric in $K$ and $K'$, so one still expects that $g_4(K')$ is large. We give an infinite family where this occurs, as well as several isolated examples. To give lower bounds on $g_4(K')$ we use Rasmussen's $s$ invariant. Hence 

\begin{cor}
Rasmussen's $s$ invariant is not a 0-trace invariant
\end{cor}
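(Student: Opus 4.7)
The plan is to read this corollary as an immediate consequence of Theorem \ref{Thm:genusgap} together with the slice genus bound coming from Khovanov homology. Rasmussen's invariant satisfies $|s(K)|/2 \leq g_4(K)$ for every knot $K$, so if $s$ were a $0$-trace invariant, then the common value $s(K) = s(K')$ would impose a common lower bound $|s|/2$ on the slice genera of any two knots $K$, $K'$ sharing a $0$-trace. Accordingly, to show $s$ is not a $0$-trace invariant it suffices to exhibit a single pair $(K, K')$ with $X_0(K) \cong X_0(K')$ and $s(K) \neq s(K')$.

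The key point is that the pairs produced in the proof of Theorem \ref{Thm:genusgap} are already of this form, essentially for free. In the construction outlined after that theorem, the knot $K$ is engineered to have small (prescribed) slice genus, while the lower bound needed on $g_4(K')$ to conclude $g_4(K) \neq g_4(K')$ is produced precisely via $s(K')$. So for those pairs one has the chain
\[
|s(K)|/2 \;\leq\; g_4(K) \;<\; s(K')/2 \;\leq\; g_4(K'),
\]
where the strict middle inequality is the content of Theorem \ref{Thm:genusgap} as proved (with the upper bound on $g_4(K)$ from the construction and the lower bound on $g_4(K')$ from Rasmussen). This forces $s(K) < s(K')$, and in particular $s(K) \neq s(K')$, while $X_0(K) \cong X_0(K')$ by Theorem \ref{Thm:genusgap}.

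The proof of the corollary therefore reduces to pointing out this strict inequality in the very pairs that witness Theorem \ref{Thm:genusgap}. There is no real obstacle: the only step requiring care is to cite the construction of Theorem \ref{Thm:genusgap} in a way that makes explicit that the lower bound on $g_4(K')$ is witnessed by $s$ rather than, say, by a Heegaard Floer or signature bound (the latter would not suffice, since those invariants are known or suspected to factor through $X_0$). Since Rasmussen's invariant is precisely the tool used to detect the large slice genus of $K'$ in the proof of Theorem \ref{Thm:genusgap}, this citation is immediate, and infinitely many such pairs follow from the infinite family promised there.
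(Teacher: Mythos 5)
Your proposal is correct and matches the paper's (implicit) argument: the pairs $K_m, K'_m$ from the proof of Theorem \ref{Thm:genusgap} satisfy $|s(K_m)|\le 2g_4(K_m)=2$ while $s(K'_m)\ge 4$, so $s(K_m)\neq s(K'_m)$ even though $X_0(K_m)\cong X_0(K'_m)$. The paper derives the corollary in exactly this way, noting that the lower bound on $g_4(K'_m)$ is witnessed by $s$ itself.
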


This addresses question 12 of \cite{4MKC16}. It is still unknown whether Ozsv{\'a}th-Szab{\'o}'s $\tau$ invariant is an invariant of the 0-trace of $K$.

Let $\sim$ denote smooth concordance and $\sC:= \{$ knots in $S^3\}/\sim$ be the concordance group. Any pattern $P$ in a solid torus a induces a well-defined map $P:\sC\to\sC$ by taking  $P([K]):=[P(K)]$ where $P$ acts on knots by the satellite operation. Since one is interested in understanding $\sC$ one might hope that $P$ sometimes induces an automorphism; in fact it unknown whether a satellite operator $P$ can ever induce a homomorphism of $\sC$ and it is conjectured that it cannot \cite{SST4M16}. More generally, one asks what sort of maps on $\sC$ can be obtained via the satellite operation. With an eye toward better understanding $\sC$, this is in particular an interesting question when attention is restricted to satellite operators with winding number 1.

It has been shown that winding number 1 satellite operators can induce non-surjective maps \cite{Lev16a} and interesting bijective maps \cite{GM95}\cite{MP17}, and that there exist patterns $J$ such that $J(U)\sim U$ but with $g_4(J(K))>g_4(K)$ for some $K$ \cite{CFHH13}. It is still unknown whether winding number 1 satellite operators can induce non-injective maps \cite{CR16}. Problem 7 of \cite{4MKC16} asked whether there exist winding number one patterns $J$ such that  $J(U)\sim U$ but with $g_4(J(K))<g_4(K)$ for some $K$; we resolve this as a corollary of our main theorem.

\begin{cor}\label{cor:satop}
There exist satellite operators $J$ such that $J(U)\sim U$ and such that $J$ induces a bijection on $\sC$ but such that $J$ does not preserve slice genus. 
\end{cor}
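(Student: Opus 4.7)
The plan is to extract a winding-number-one pattern $J$ from the construction behind Theorem \ref{Thm:genusgap} and verify the three required properties. Take a pair $(K,K')$ produced by Theorem \ref{Thm:genusgap}, so $X_0(K)\cong X_0(K')$ and, after relabelling if necessary, $g_4(K')<g_4(K)$. The Lickorish--Brakes-style construction underlying the proof packages this trace diffeomorphism as a dualizable, winding-number-one pattern $J$ in a solid torus $V$, with the feature that $J(K)$ is isotopic (hence concordant) to $K'$; this is the pattern I would propose.

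First, $J$ visibly fails to preserve slice genus because $g_4(J(K))=g_4(K')<g_4(K)$. Second, the dualizable-pattern formalism produces a companion pattern $J^*$ obtained by swapping the two distinguished components of the defining link, equivalently from the inverse trace diffeomorphism. The satellite compositions $J\circ J^*$ and $J^*\circ J$ correspond, at the level of 0-traces, to detaching and then reattaching the same 2-handle via inverse diffeomorphisms; this identification persists at the concordance level, so $J^*$ is a two-sided concordance-theoretic inverse of $J$, and $J$ induces a bijection on $\sC$.

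Third, to verify $J(U)\sim U$ I would use the 4-dimensional origin of the pattern. When the satellite is applied to the unknot, the solid torus $V$ embeds standardly in $S^3$ and caps off into $B^4$; the trace diffeomorphism in the degenerate case where the companion knot is unknotted yields an explicit disk in $B^4$ bounded by $J(U)$, coming from the cocore of the 2-handle in $X_0(U)=S^2\times D^2$ after the relevant handle rearrangement. To get infinitely many such $J$, one runs the same recipe on the infinitely many pairs from Theorem \ref{Thm:genusgap}.

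The main obstacle is verifying $J(U)\sim U$: bijectivity and the slice-genus inequality follow essentially formally from the dualizable-pattern formalism once $J$ is identified, but the unknot-slicing property is a genuine constraint on which patterns can arise, requiring an explicit slice disk for $J(U)$ rather than only a cobordism between higher-genus concordance classes. This is the content that must be honestly extracted from the 4-manifold construction underlying Theorem \ref{thm:diffeotraces}, rather than deduced formally from concordance-group identities.
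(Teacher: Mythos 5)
Your proposal correctly identifies the ingredients (dualizable patterns arising from the trace construction, bijectivity via dual patterns, and the need to verify $J(U)\sim U$), but the specific pattern you posit is not what the construction produces, and this is a genuine gap. Proposition \ref{prop:dualequiv} yields patterns \emph{based at the unknot}: a dualizable $P$ with $P(U)\simeq K$ and its dual $P^*$ with $P^*(U)\simeq K'$. It does not yield a single pattern $J$ with $J(K)\simeq K'$; the ``relative'' maps that come for free from Theorem \ref{thm:bijective} go the other way, namely $\overline{P}(K')=\overline{P}(P^*(U))\sim U$ and $\overline{P^*}(K)\sim U$, i.e.\ they carry the given knots to the unknot. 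A pattern doing what you describe would already have to be engineered, and your route to $J(U)\sim U$ (an ``explicit disk from the degenerate trace diffeomorphism'') does not attach to any statement in the formalism.

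The paper closes exactly this gap with a composition: $Q_m:=(P_m)_{\#}\circ\overline{P_m}$, where $(P_m)_{\#}$ is the geometric winding number one pattern with $(P_m)_{\#}(U)\simeq P_m(U)$. Then $\overline{P_m}$ carries $K_m'=P_m^*(U)$ to something concordant to $U$, and $(P_m)_{\#}$ connect-sums with $K_m$, giving $Q_m(K_m')\sim K_m$ with $g_4(K_m)=1<2=g_4(K_m')$. Crucially, $Q_m(U)\simeq P_m(U)\#\overline{P_m}(U)=K_m\#\overline{K_m}$ is slice by the standard connected-sum-with-reverse-mirror fact --- no 4-dimensional disk needs to be extracted. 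Bijectivity holds because $Q_m$ is dualizable, being a composition of a dualizable pattern with a geometric winding number one (hence dualizable) pattern (Proposition \ref{prop:dualcomp} and Remark \ref{rmk:lightbulbdual}). So the missing idea in your write-up is precisely this composition, which simultaneously produces the genus drop and makes $J(U)\sim U$ automatic.
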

Satellite operators which induce bijections that fix the identity give candidates for automorphisms of $\sC$. Motivated by checking whether the examples used to prove Corollary \ref{cor:satop} are automorphisms, we give a pair of obstructions to a satellite map inducing a homomorphism on $\sC$. One of the obstructions is given below, the second is somewhat technical to state and is relegated to Section \ref{sec:maps}. While these obstructions are not particularly difficult and may be known to experts, we cannot find proofs in the literature so we produce them here. 

\begin{thm}\label{thm:nothom1}
Let $P$ be a winding number $w$ pattern and suppose that there exists some knot $K$ and additive slice genus bound $\sigma$ such that $$w g_4(K)<\sigma(P(K)).$$ Then $P$ is not a homomorphism.  
\end{thm}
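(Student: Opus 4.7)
I will argue by contradiction, so suppose that the winding number $w$ pattern $P$ induces a homomorphism on $\sC$, and let $K$ and $\sigma$ satisfy $wg_4(K)<\sigma(P(K))$. The strategy is to compare the growth rates of $\sigma$ and $g_4$ under iterated connected sums: additivity of $\sigma$ forces $\sigma\circ P$ to grow linearly in $n$ when applied to $\#^n K$, while I will give a geometric upper bound on $g_4(P(\#^n K))$ that is linear in $n$ with slope $|w|g_4(K)$ up to an additive constant. The hypothesis makes these two linear growths incompatible.

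First, because $P$ is a homomorphism and the class of $\#^n K$ in $\sC$ is $n[K]$, I get $P(\#^n K)\sim \#^n P(K)$ for every $n\ge 1$. Since $\sigma$ is a concordance invariant and additive under connected sum,
\[
\sigma\bigl(P(\#^n K)\bigr) \;=\; n\,\sigma\bigl(P(K)\bigr).
\]
Combining with the slice genus bound $\sigma\le g_4$ yields $n\,\sigma(P(K))\le g_4(P(\#^n K))$.

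The key geometric step is the inequality
\[
g_4\bigl(P(L)\bigr) \;\le\; g(P) + |w|\,g_4(L)
\]
valid for every knot $L$, where $g(P)$ is a constant depending only on $P$. To obtain it, pick a properly embedded oriented surface $R$ in the pattern solid torus $V = S^1\times D^2$ with $\partial R = P \cup (w\lambda)$, where $\lambda$ is the preferred longitude; such an $R$ exists because $[P]=w[\lambda]$ in $H_1(V)=\bZ$. Now embed $V$ in $S^3=\partial B^4$ as a tubular neighborhood of $L$ using the satellite embedding, push $R$ slightly into $B^4$, and then glue $R$ to $|w|$ parallel copies of a minimal genus slice surface $F\subset B^4$ for $L$ along their common boundaries (which are $|w|$ parallel copies of $L$). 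A short Euler characteristic computation shows that the resulting surface has boundary $P(L)$ and genus $g(R)+|w|g_4(L)$. Minimizing over $R$ defines the constant $g(P)$.

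Substituting this into the earlier chain of inequalities, I obtain
\[
n\,\sigma\bigl(P(K)\bigr) \;\le\; g(P) + |w|\,n\,g_4(K),
\]
that is, $n\bigl(\sigma(P(K)) - |w|g_4(K)\bigr) \le g(P)$. The hypothesis $wg_4(K)<\sigma(P(K))$, combined with $|\sigma|\le g_4$ (so that one may reduce to $w\ge 0$ by replacing $P$ by its orientation reverse if necessary), makes the bracketed quantity a fixed positive real. Letting $n\to\infty$ gives a contradiction with the fact that $g(P)$ is a constant.

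The main obstacle I expect is the verification of the geometric inequality: one must construct the auxiliary surface $R\subset V$ with matching framing so that its boundary longitudes align correctly with the $|w|$ parallel copies of $F$, and one must check orientations so that the gluing produces a single connected surface bounding $P(L)$. Once this routine but non-trivial surface construction is in hand, the remainder of the argument is a formal consequence of additivity of $\sigma$, the slice genus bound $\sigma\le g_4$, and the hypothesized inequality.
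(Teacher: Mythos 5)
Your proposal is correct and follows the same skeleton as the paper's proof: assume $P$ is a homomorphism, use additivity of $\sigma$ to get a lower bound $g_4(P(nK))\ge n\sigma(P(K))$ growing with slope $\sigma(P(K))$, produce an upper bound on $g_4(P(nK))$ growing with slope $|w|g_4(K)$ plus a constant, and let $n\to\infty$. The only real difference is how the upper bound is obtained: the paper cites Proposition \ref{prop:bdddistance} (the bounded-distance result of Cochran--Harvey) to compare $P$ with the $(w,1)$-cable operator $J$, and then uses $g_4(J(nK))\le w\,g_4(nK)$; you instead prove the combined inequality $g_4(P(L))\le g(P)+|w|g_4(L)$ directly, by capping a surface $R\subset V$ with $\partial R = P\cup(w\lambda)$ off with $|w|$ push-offs of a minimal slice surface for $L$. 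This is exactly the geometry underlying the cited proposition (the cobordism in $S^1\times D^2\times[0,1]$ there is essentially your $R$), so your version is a self-contained unpacking rather than a different argument; it buys independence from the reference at the cost of the framing/orientation bookkeeping you flag, and your Euler characteristic count does come out right. One small remark: your aside about reducing to $w\ge 0$ via $|\sigma|\le g_4$ is unnecessary here, since the paper's convention orients $\lambda_V$ so that winding numbers are always non-negative, whence $w=|w|$ throughout (the paper's own proof silently uses this as well).
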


\begin{cor}\label{prop:notauts}
The satellite operators used to prove Corollary \ref{cor:satop} do not induce automorphisms on $\sC$. 
\end{cor}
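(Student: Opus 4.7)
The plan is to apply Theorem \ref{thm:nothom1} to the satellite operators $J$ produced in Corollary \ref{cor:satop}. These $J$ are already known to induce bijections of $\sC$, so once we show they fail to be homomorphisms they automatically fail to be automorphisms. Each such $J$ has winding number $w=1$, so the hypothesis of Theorem \ref{thm:nothom1} reduces to exhibiting one knot $K$ and one additive slice genus bound $\sigma$ with $g_4(K) < \sigma(J(K))$.

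The natural candidate for $\sigma$ is Rasmussen's $s$-invariant, suitably normalized so that $\sigma(K) = |s(K)|/2 \leq g_4(K)$ and so that $\sigma$ is additive under connected sum; this is precisely the slice genus bound driving the lower bounds in Theorem \ref{Thm:genusgap}. First I would unwind the construction of $J$ in Corollary \ref{cor:satop}: $J$ is built from a non-symmetric pair $(K,K')$ produced by Theorem \ref{thm:tracesconc}, where $K$ has prescribed small slice genus and $K'$ has large slice genus certified by $s$, and $J$ is arranged so that $J(K) \sim K'$ in $\sC$. For this distinguished $K$ the inequality
\[
g_4(K) \;<\; \frac{|s(K')|}{2} \;=\; \sigma(J(K))
\]
is exactly the lower bound on $g_4(K')$ already established in the proof of Theorem \ref{Thm:genusgap}.

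Feeding this $K$, $\sigma$, and $w=1$ into Theorem \ref{thm:nothom1} yields that $J$ is not a homomorphism on $\sC$, and hence, being bijective, not an automorphism. The only step requiring care is the identification $J(K) \sim K'$: the satellite operator produced in Corollary \ref{cor:satop} must be the one that recovers the $s$-invariant-detected partner $K'$ from $K$ up to concordance, rather than some other pattern with diffeomorphic $0$-trace but no controlled slice genus gap. Verifying this identification is a bookkeeping exercise pulled directly from the construction of $J$ behind Corollary \ref{cor:satop}, and is the main (but modest) obstacle; once in place, the rest is a one-line application of Theorem \ref{thm:nothom1}.
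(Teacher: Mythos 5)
There is a genuine gap, and it sits exactly at the step you flagged as ``bookkeeping.'' The operator constructed in the proof of Corollary \ref{cor:satop} is $Q_m:=(P_m)_{\#}\circ\overline{P_m}$, and the concordance computed there is $Q_m(K_m')\sim K_m$ --- that is, $Q_m$ sends the \emph{large}-genus knot $K_m'$ (with $g_4=2$, certified by $s$) to the \emph{small}-genus knot $K_m$ (with $g_4=1$). This is the genus-\emph{lowering} direction demanded by Problem 7 of \cite{4MKC16}, not the identification $J(K)\sim K'$ you assert. Consequently Theorem \ref{thm:nothom1} does not apply to $Q_m$ with the available data: its hypothesis $w\,g_4(K)<\sigma(Q_m(K))$ would read, for $K=K_m'$ and $w=1$, as $2=g_4(K_m')<\sigma(K_m)\le g_4(K_m)=1$, which is false. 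A genus-lowering example is never, by itself, input for Theorem \ref{thm:nothom1}.

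The repair needs one more idea. Since $Q_m$ is dualizable, Theorem \ref{thm:bijective} says the map it induces on $\sC$ is a bijection with inverse induced by the winding number one pattern $\overline{Q_m^*}$; applying the inverse to $Q_m(K_m')\sim K_m$ gives $\overline{Q_m^*}(K_m)\sim K_m'$. Now take $\sigma=s/2$ (note: $|s|/2$, as you propose, is \emph{not} superadditive under connect sum, since $s$ is a homomorphism; $s/2$ itself is the correct additive slice genus bound) and compute $\sigma(\overline{Q_m^*}(K_m))=s(K_m')/2=2>1=g_4(K_m)$. Theorem \ref{thm:nothom1} then shows $\overline{Q_m^*}$ does not induce a homomorphism. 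Finally, if $Q_m$ induced an automorphism of $\sC$, its inverse map --- which is the map induced by $\overline{Q_m^*}$ --- would also be a homomorphism, a contradiction. So the conclusion is right and the overall strategy (reduce to Theorem \ref{thm:nothom1}) is the intended one, but the theorem must be applied to the dual operator, and the passage back to $Q_m$ via ``the inverse of an automorphism is an automorphism'' is an essential step your write-up is missing.
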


In a similar spirit, we show 
\begin{thm}\label{thm:stablegenus0}
Suppose a pattern $P$ induces a homomorphism on $\sC$. If $P$ has winding number 0 then $P(K)$ has stable slice genus 0 for all $K$. If $P$ has winding number one then $P(K)\#-K$ has stable slice genus 0 for all $K$. 
\end{thm}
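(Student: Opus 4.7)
The plan is to use the homomorphism hypothesis to absorb each $\#^n$ into the companion, and then to bound the slice genus of the resulting satellite by a constant depending only on $P$. Let $V$ denote the solid torus in which $P$ sits, with core $c$.

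In the winding number zero case, $[P]=0$ in $H_1(V)\cong\bZ$, so $P$ bounds an oriented surface $F\subset V$; let $g$ denote its genus. For any companion knot $J$, embedding $V$ in $S^3$ with core $J$ and $0$-framing carries $F$ forward to a Seifert surface for $P(J)$ of genus $g$, whence $g_4(P(J))\leq g$ independently of $J$. Combined with the homomorphism equality $\#^n P(K)\sim P(\#^n K)$, this gives $g_4(\#^n P(K))\leq g$ for every $n$, so $P(K)$ has stable slice genus zero.

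In the winding number one case, $P$ and $c$ represent the same class in $H_1(V)$ and so cobound a connected oriented surface $F\subset V$; let $h$ denote its genus. For any companion $J$, embed $V$ in $S^3$ with core $J$ and consider $V\times I\subset S^3\times I$; the surface $\Sigma=(F\times\{0\})\cup(c\times I)$ is a connected genus $h$ cobordism from $P(J)\subset S^3\times\{0\}$ to $J\subset S^3\times\{1\}$ in $S^3\times I$. The standard trick of tubing an arc joining the two boundary components of $\Sigma$ and deleting a small $4$-ball converts $\Sigma$ into a slice surface for $P(J)\#-J$ in $B^4$ of genus $h$, so $g_4(P(J)\#-J)\leq h$ independently of $J$. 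Applying this with $J=\#^n K$ and using the homomorphism equality $\#^n(P(K)\#-K)\sim P(\#^n K)\#-(\#^n K)$ gives a uniform bound $h$ on $g_4(\#^n(P(K)\#-K))$, so $P(K)\#-K$ has stable slice genus zero.

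The main content lies in the uniform bounds $g_4(P(J))\leq g$ and $g_4(P(J)\#-J)\leq h$. Both come directly from the homological hypotheses on $P$: a winding number zero pattern bounds a surface in $V$, and a winding number one pattern cobounds a surface in $V$ with the core. I do not anticipate serious technical difficulty beyond verifying that these surfaces in $V$ push forward as expected under satelliting; once the bounds are in hand, the theorem is a direct consequence of the homomorphism hypothesis applied to $\#^n K$.
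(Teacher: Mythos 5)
Your proof is correct and is essentially the paper's argument: the paper compares $P$ to the identity pattern (winding number one) or the trivial pattern (winding number zero) via Proposition \ref{prop:bdddistance}, whose content is exactly the uniform genus bound you establish by hand from a surface in $V$ (respectively, a cobordism in $V\times I$), and then invokes the homomorphism hypothesis to rewrite $P(nK)\#-J(nK)$ as $n(P(K)\#-J(K))$. You have simply unpacked that cited proposition in the two relevant special cases rather than quoting it.
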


The only known examples of knots with stable slice genus $0$ are the amphichiral knots, which have 2-torsion in $\sC$. As such, Theorem \ref{thm:stablegenus0} can be read as evidence that winding number 0 and 1 satellite operators never induce interesting homomorphisms. See Section \ref{sec:maps} for further discussion. 

All manifolds, submanifolds, maps of manifolds and concordances are smooth throughout this work, all homology has integer coefficients and all knots and manifolds are taken to be oriented. We will use $\cong$ to denote diffeomorphic manifolds, $\simeq$ to denote isotopic links, and $\sim$ to denote concordant knots. We will assume familiarity with handle calculus, for the details see \cite{GS99}.

\subsection*{Acknowledgements} The author previously explored many of these tools and objects in joint work with Allison N. Miller \cite{MP17} and that collaboration, as well as many subsequent insightful conversations, informs this work. Additionally, the author is grateful to Allison for pointing out an error in an early proof of Theorem \ref{thm:nothom1} and for comments on a draft of this paper. Conversations with Jeff Meier were clarifying and motivating and we are also grateful to Tye Lidman for helpful conversations.
We would like to acknowledge the developer of the Kirby Calculator \cite{Swe11} which we found helpful and sanity preserving while developing the examples in this work. Finally, the author is hugely grateful to her adviser, John Luecke, for extraordinary generosity with his encouragement, expertise, and time. \\
The author was partially supported by an NSF GRFP fellowship.

\section{Constructing knots with diffeomorphic traces}\label{sec:traces}

We begin by constructing pairs of knots with diffeomorphic traces.  Theorem \ref{thm:diffeotraces} was motivated by an inside-out take on the well-known dualizable patterns construction. 

Let $L$ be a three component link with (blue, green, and red) components $B,G,$ and $R$ such that the following hold: the sublink $B\cup R$ is isotopic in $S^3$ to the link $B\cup \mu_B$ where $\mu_B$ denotes a meridian of $B$, the sublink $G\cup R$ is isotopic to the link $G\cup \mu_G$, and lk$(B,G)=0$. From $L$ we can define an associated four manifold $X$ by thinking of $R$ as a 1-handle, in dotted circle notation, and $B$ and $G$ as attaching spheres of 0-framed 2-handles. See Figure \ref{fig:LforK} for an example of such a handle description. In a moment we will also define a pair of knots $K$ and $K'$ associated to $L$.
\begin{thm}\label{thm:diffeotraces}
$X\cong X(K)\cong X(K')$. 
\end{thm}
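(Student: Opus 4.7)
The plan is to construct $K$ and $K'$ directly as the outputs of two complementary $1$-/$2$-handle cancellations in the given handle diagram for $X$, and then to verify that each cancellation produces a Kirby diagram consisting of a single $0$-framed $2$-handle attached along a knot in $S^3$.

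For the first cancellation, I would use the hypothesis $B\cup R\simeq B\cup \mu_B$ to perform an ambient isotopy of $S^3$ arranging $R$ as a small meridional circle around $B$; applying this isotopy to all of $L$ drags $G$ to some knot $\widetilde{G}$. Now $R$ (viewed as a dotted $1$-handle) and $B$ (a $0$-framed $2$-handle) are geometrically dual, so after arranging $\widetilde G$ to be geometrically disjoint from a disk that $R$ bounds in the complement of $B$, the pair $(R,B)$ cancels. The algebraic obstruction to disjointness is
\[
\mathrm{lk}(\widetilde G, R) \;=\; \mathrm{lk}(G,R) \;=\; \mathrm{lk}(G,B) \;=\; 0,
\]
where the middle equality uses that the isotopy makes $R$ a meridian of $B$, and the last equality is our linking hypothesis. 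Since $\widetilde G$ is algebraically null in a meridional disk for $R$, handle-sliding $\widetilde G$ over $B$ a suitable number of times removes the geometric intersections. Because $B$ has framing $0$ and $\mathrm{lk}(\widetilde G,B)=0$, each such slide changes the framing of $\widetilde G$ by $0$, so $\widetilde G$ retains its $0$-framing (and linking numbers can be arranged to stay zero as well). Define $K$ to be the resulting knot in $S^3$ after canceling the pair $(R,B)$; the residual Kirby diagram is a single $0$-framed $2$-handle on $K$, proving $X\cong X_0(K)$.

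The construction of $K'$ is completely symmetric: exchange the roles of $B$ and $G$, use the hypothesis $G\cup R\simeq G\cup \mu_G$ to arrange $R$ as a meridian of $G$, slide $B$ over $G$ to make it disjoint from a disk for $R$, cancel $(R,G)$, and let $K'$ be the resulting image of $B$. Again $\mathrm{lk}(B,G)=0$ is exactly what is needed to ensure that the $0$-framing on $B$ is preserved by the slides, yielding $X\cong X_0(K')$.

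The main technical point in both arguments, and the only step that requires anything beyond standard Kirby calculus, is verifying that the handle slides preserve the $0$-framing; this is precisely what the linking-number-zero hypothesis on $L$ is designed to ensure. The isotopy hypotheses on $B\cup R$ and $G\cup R$ are exactly the geometric cancellation conditions for the two $1$-/$2$-handle pairs, and the symmetric roles of $B$ and $G$ in the definition of $L$ are what produce the pair of knots $K,K'$ with the same trace.
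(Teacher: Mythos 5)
Your proof follows the same route as the paper's: isotope the diagram so that $R$ bounds a disk $D_R$ meeting $B$ in a single point, slide $G$ over $B$ to clear $D_R$, cancel the $1$-handle $R$ against the $2$-handle $B$, and then symmetrize in $B$ and $G$. The extra detail you supply about framings --- that slides over the $0$-framed $B$ preserve the $0$-framing of $G$ precisely because $\mathrm{lk}(B,G)=0$ --- is correct and is exactly where that hypothesis enters; the paper leaves this implicit.

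One computation in your write-up is wrong, though it does not sink the argument. You claim $\mathrm{lk}(\widetilde G,R)=\mathrm{lk}(G,R)=\mathrm{lk}(G,B)=0$ and present this as ``the algebraic obstruction to disjointness.'' In fact the hypothesis $G\cup R\simeq G\cup\mu_G$ forces $\mathrm{lk}(G,R)=\pm1$, and $R$ being isotoped to a meridian of $B$ does not give $\mathrm{lk}(G,R)=\mathrm{lk}(G,B)$: the ambient isotopy drags $G$ along, and $\widetilde G$ may well pass through the small meridian disk, so the linking number with $R$ is unchanged from whatever it was, namely $\pm 1$. The step survives anyway, for a different reason: since $B$ punctures $D_R$ geometrically (hence algebraically) exactly once, each slide of $G$ over $B$ changes $G\cdot D_R$ by $\pm1$, so finitely many slides make $G$ disjoint from $D_R$ regardless of the initial value of $\mathrm{lk}(G,R)$; no vanishing condition is needed, and the slides still preserve the $0$-framing since $\mathrm{lk}(G,B)$ remains $0$ throughout. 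Finally, note that your labels are swapped relative to the paper's convention (cancelling $(R,B)$ yields the image of $G$, which the paper calls $K'$); this is harmless for the symmetric statement of the theorem but matters for Theorem \ref{thm:tracesconc}, where $K$ is specifically the knot shown to be concordant to $B$.
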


\begin{proof}
Isotope $L$ to a diagram in which $R$ has no self crossings (hence such that $R$ bounds a disk $D_R$ in the diagram) and in which $B\cap D_R$ is a single point. Slide $G$ over $B$ as needed to remove the intersections of $G$ with $D_R$. After the slides we can cancel the two handle with attaching circle $B$ with the one handle and we are left with a handle description for a 0-framed knot trace; this knot is $K'$. 

To construct $K$ and see $X\cong X(K)$, perform the above again with the roles of $B$ and $G$ reversed. 
\end{proof}




\begin{rmk}
By modifying the framing hypotheses in Theorem \ref{thm:diffeotraces} this technique can be easily modified to produce knots $J$ and $J'$ with $X_n(J)\cong X_n(J')$ for any integer $n$. 
\end{rmk}

For a link $L$ in $S^3$, define $-L$ to be the mirror of $L$ with its orientation reversed. Two $n$-component links $L_0$ and $L_1$ are said to be \emph{strongly concordant} if they cobound a smoothly embedded surface $\Sigma$ in $S^3\times [0,1]$ such that $\Sigma$ is a disjoint union of $n$ annuli and $\Sigma\cap (S^3\times\{0\})=-L_0$ and $\Sigma\cap (S^3\times\{1\})=L_1$. When $n=1$ we omit the word strongly.

\begin{figure}
\includegraphics[width=13cm]{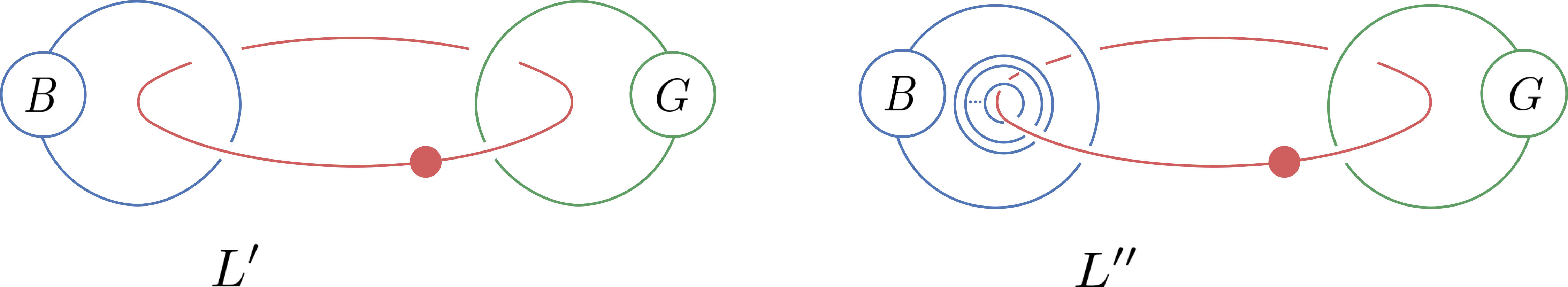}
\caption{}
\label{fig:lprimes}
\end{figure}
\begin{figure}
\includegraphics[width=5cm]{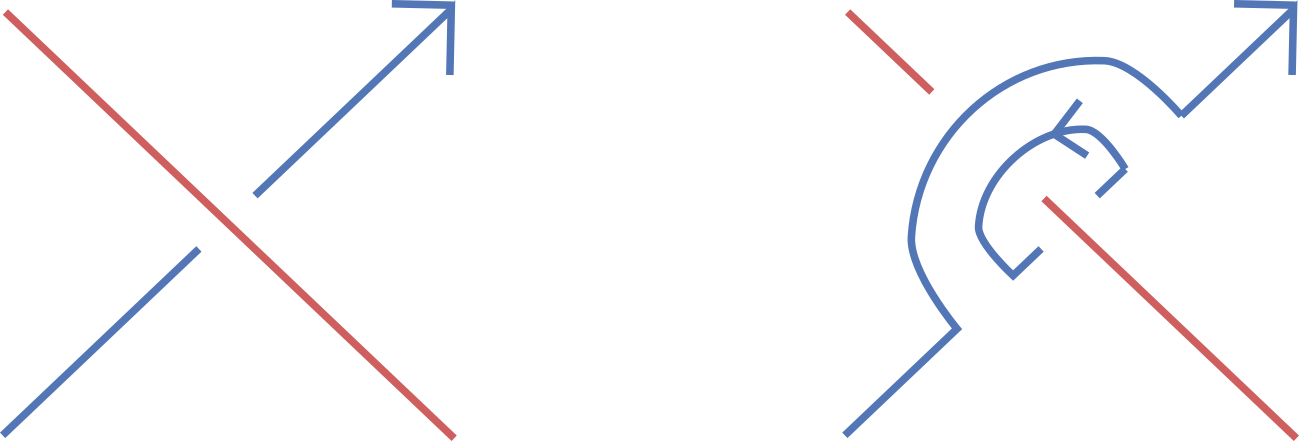}
\caption{}
\label{fig:bandcrossing}
\end{figure}
\begin{figure}
\includegraphics[width=5cm]{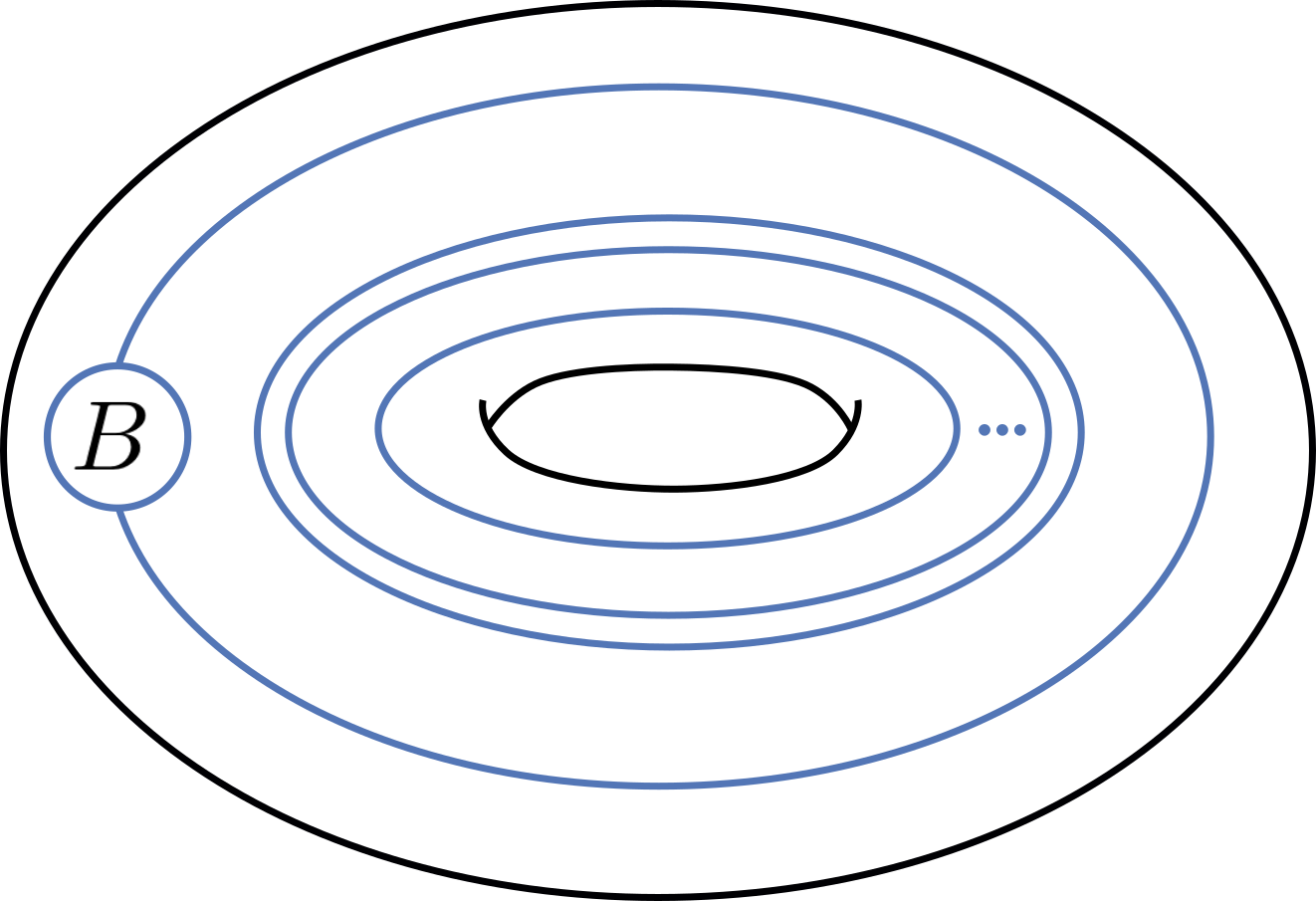}
\caption{}
\label{fig:Jpattern}
\end{figure}
\begin{thm}\label{thm:tracesconc}
Let $X$ be a four manifold with a handle description $L:=R\cup B\cup G$ as in Theorem \ref{thm:diffeotraces}. Further, suppose that $G\sim U$ and the link $B\cup G$ is split. If $K$ arises from $L$ as in Theorem \ref{thm:diffeotraces} then $K\sim B$.  
\end{thm}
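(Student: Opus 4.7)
The plan is to exhibit an explicit smoothly embedded annulus cobounded by $B$ and $K$ in $S^3 \times [0,1]$.

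First, I would unpack what $K$ is. From the proof of Theorem \ref{thm:diffeotraces}, $K$ is the attaching circle of the remaining $2$-handle after sliding $B$ over $G$ enough times to eliminate all intersections of $B$ with $D_R$ and then cancelling the $G$-handle against the $1$-handle $R$. Since a handle slide of $B$ over $G$ replaces the attaching circle by its band sum with a $0$-framed pushoff of $G$, we may write $K$ as a knot in $S^3$ of the form
$$K = B \#_{b_1} G_1 \#_{b_2} \cdots \#_{b_k} G_k,$$
where each $G_i$ is a parallel copy of $G$ and each $b_i$ is a band in $S^3$.

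Next, I would exploit the two hypotheses. Because $B \cup G$ is split, fix disjoint $3$-balls $B_B, B_G \subset S^3$ with $B \subset B_B$ and $G \subset B_G$; then all pushoffs $G_i$ may be taken to lie inside $B_G$. Because $G \sim U$, the knot $G$ bounds a smooth slice disk in the $4$-ball $B_G \times [0, 1/2] \subset S^3 \times [0, 1/2]$, so taking parallel copies yields pairwise disjoint slice disks $\Delta_1, \ldots, \Delta_k \subset B_G \times [0, 1/2]$ bounded by $G_1, \ldots, G_k$ at level $1/2$.

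I would then assemble a surface $S \subset S^3 \times [0,1]$: below level $1/2$, the trivial cylinder $B \times [0, 1/2] \subset B_B \times [0, 1/2]$ together with the disks $\Delta_i \subset B_G \times [0,1/2]$, which are disjoint by the splitting hypothesis; at successive levels $1/2 < t_1 < \cdots < t_k$, the saddles realized by the bands $b_1, \ldots, b_k$, so that at level $1$ the link is the single knot $K$; and trivial cylinders filling in between. By construction $S$ is smoothly embedded, orientable, has boundary $B \sqcup K$, and has $\chi(S) = k - k = 0$ ($k$ disks contribute $+k$ and $k$ saddles contribute $-k$). Connectedness follows because the saddles successively fuse each $\Delta_i$-cylinder to the growing $B$-component. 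Hence $S$ is an annulus, giving $K \sim B$.

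The main obstacle is the embeddedness of $S$: the slice disks $\Delta_i$ must fit disjointly from the cylinder over $B$, and this is exactly what the splitting hypothesis guarantees; without it, the $\Delta_i$ could be forced to meet $B \times [0,1/2]$, and the argument would break down.
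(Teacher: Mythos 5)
Your proof is correct, and it builds what is in essence the same annulus as the paper's proof, but it is organized around a different set of bands. The paper starts from bands $\{\beta_i\}$ on $B$ realizing crossing changes of $B$ with $R$ (available in the required standard form because $B\cup G$ is split), carries these bands through the handle slides and the cancellation, and observes that banding $K$ along their images yields the link $J(G)$ --- essentially $B$ together with $0$-framed parallel copies of $G$ --- which is strongly concordant to $J(U)$ because $G$ is slice. You instead use the bands intrinsic to the construction of $K$ itself: the handle-slide bands exhibiting $K$ as an iterated band sum of $B$ with $0$-framed pushoffs of $G$, which you then cap off with parallel copies of a slice disk for $G$. Both arguments produce a surface with $k$ saddles and $k$ disks, hence an annulus, and both invoke the split hypothesis at exactly the same point, namely to keep the slice disks for the copies of $G$ disjoint from the product cylinder over $B$; you correctly identify this as the step that fails for the non-split example in the paper's remark. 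Your version avoids the bookkeeping of tracking the $\beta_i$ through the diffeomorphism, which is a genuine simplification. The one step you should make explicit is why the $1$-handle/$2$-handle cancellation leaves the slid curve $B\#_{b_1}G_1\#\cdots\#_{b_k}G_k$ unchanged as a knot diagram in $S^3$: this is the standard cancellation move, valid here because after the slides that curve is disjoint from $D_R$, so deleting the $0$-framed pair $R\cup G$ is supported away from it. Two harmless details worth a sentence each: some $G_i$ may appear with reversed orientation depending on the signs of the slides (a slice disk for $G$ serves either orientation), and the framing induced on $G$ by a slice disk is the $0$-framing, which is why the parallel slice disks bound precisely the pushoffs produced by the $0$-framed handle slides.
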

\begin{proof}
Since $B\cup G$ is split, there exist finitely many crossing changes of $B$ with $R$ which change $L$ into the link $L'$ in Figure \ref{fig:lprimes}. As such there is a finite sequence of bands $\{\beta_i\}$ of $B$, as in Figure \ref{fig:bandcrossing}, which change $L$ into the link $L''$ in Figure \ref{fig:lprimes}. Then there is a diagram of $L$ obtained by performing the dual bands to given diagram of $L''$; isotope $L$ to this diagram and decorate it with the bands $\{\beta_i\}$. Then slide $B$ across $G$ at all points of $B\cap D_R$ so that we can cancel the 2-handle $G$ with the one handle. We obtain a diagram of $K\subset S^3$ decorated with bands, and such that when these bandings of $K$ are performed we obtain the link $J(G)$, where $J$ is the pattern in Figure \ref{fig:Jpattern}. Since $G$ is slice we obtain that the link $J(G)$ is strongly concordant to $J(U)$, hence $K$ is concordant to $B$. 
\end{proof}

\begin{figure}

\includegraphics[width=12cm]{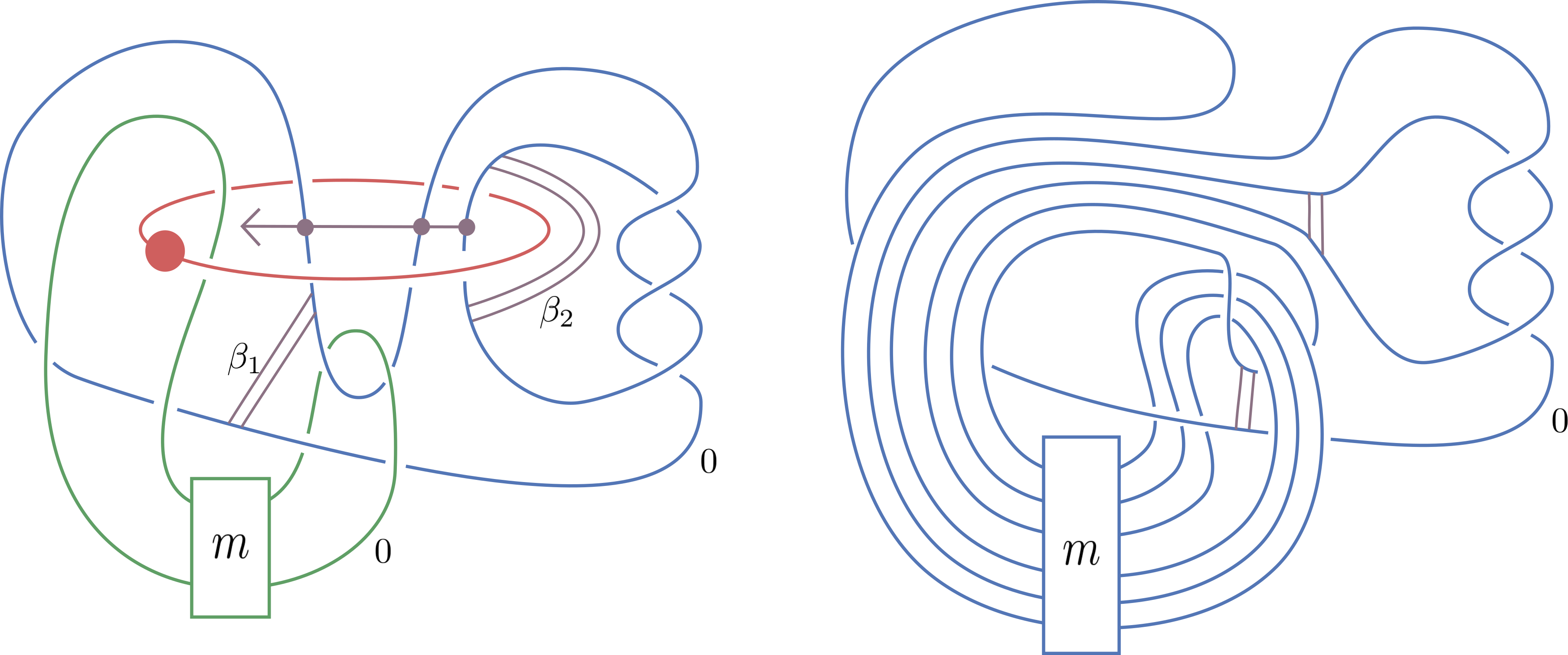}
\caption{A handle diagram for $X_m$ and diffeomorphism to $X_0(K_m)$}
\label{fig:LforK}
\end{figure}

\begin{ex}\label{ex:Km}
Let $m$ be an integer and $L_m$ be the decorated link on the left hand side of Figure \ref{fig:LforK}, which describes a four manifold $X_m$, and observe that $L_m$ satisfies the hypotheses of Theorem \ref{thm:tracesconc}. After the indicated slides we obtain a diagram of $X_m$ as the 0-trace of a knot we call $K_m$. By Theorem \ref{thm:tracesconc}, $K_m$ is concordant to $B$ which we see is isotopic to the right-hand trefoil for all $m$.  
\begin{figure}
\includegraphics[width=12cm]{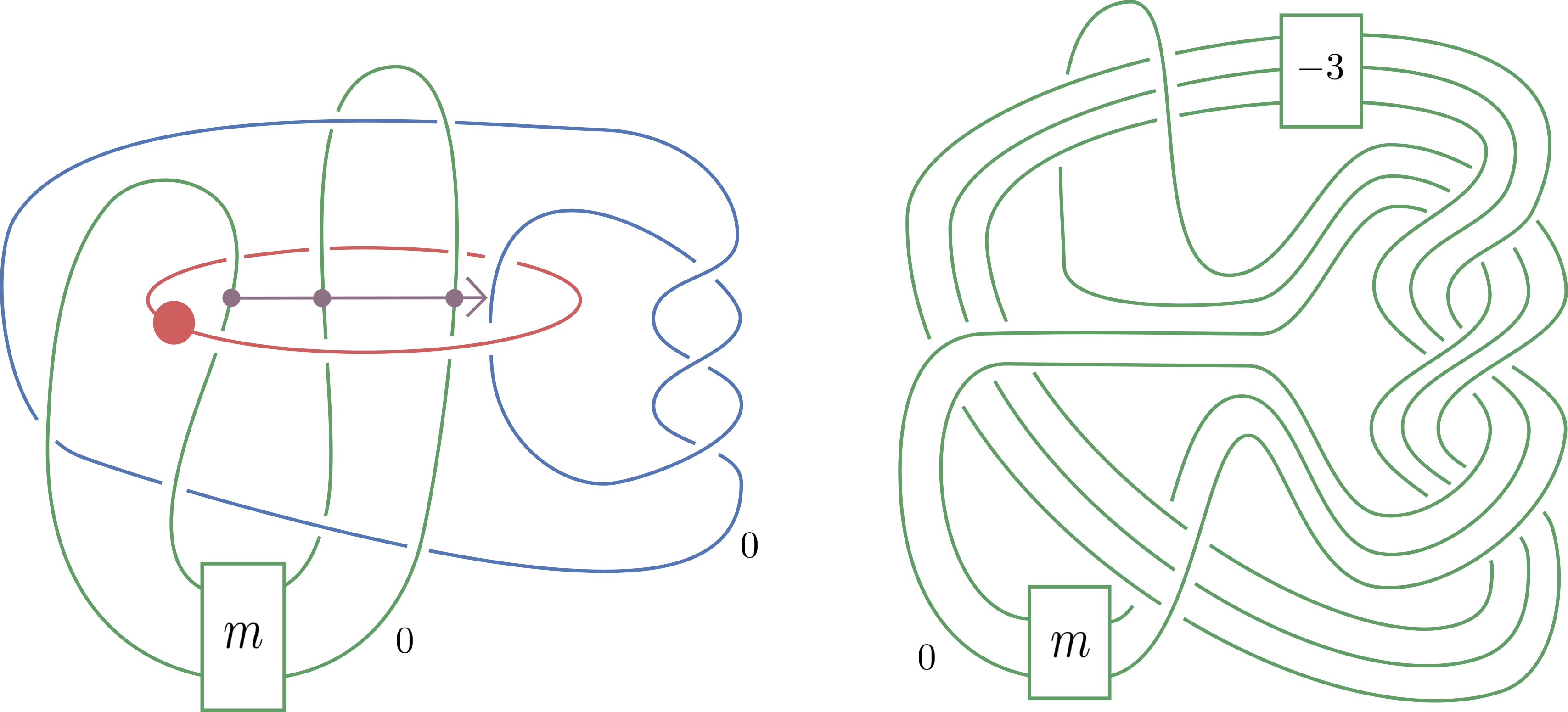}
\caption{The same handle diagram for $X_m$ and diffeomorphism to $X_0(K'_m)$}
\label{fig:LforKprime}
\end{figure}
We then isotope $L_m$ to get a handle diagram for $X_m$ as the 0-trace of a knot we call $K'_m$. See Figure \ref{fig:LforKprime}.
\end{ex}

\begin{rmk}
In Figure \ref{fig:LforK} we have illustrated bands $\{\beta_i\}$ such that banding along $\{\beta_i\}$ in the left hand diagram changes $B$ into a three component link split from $G$, where two components are isotopic to $\mu_R$, as in the proof of Theorem \ref{thm:tracesconc}. We also kept track of $\{\beta_i\}$ through the diffeomorphism. In practice neither exhibiting nor keeping track of the bands is necessary; we have included it here to build intuition for the proof of Theorem \ref{thm:tracesconc} and demonstrate how Theorem \ref{thm:tracesconc} can be used to give an explicit description of the implied concordance. 
\end{rmk}

The diagram we give of $K_m$ in Figure \ref{fig:LforK} can certainly be simplified, but since we will only be concerned with $K_m$ up to concordance and we understand $[K_m]$ by Theorem \ref{thm:tracesconc}, we don't pursue this. This illustrates the usefulness of Theorem \ref{thm:tracesconc}; if one wants to compare the concordance properties of knots with diffeomorphic traces one can get a tractable pair by choosing $L$ so that $K'$ remains relatively simple (in crossing number perhaps, or whatever is convenient) and since we understand $[K]$ it does not matter if the knot $K$ is complicated. 

\begin{figure}

\includegraphics[width=5cm]{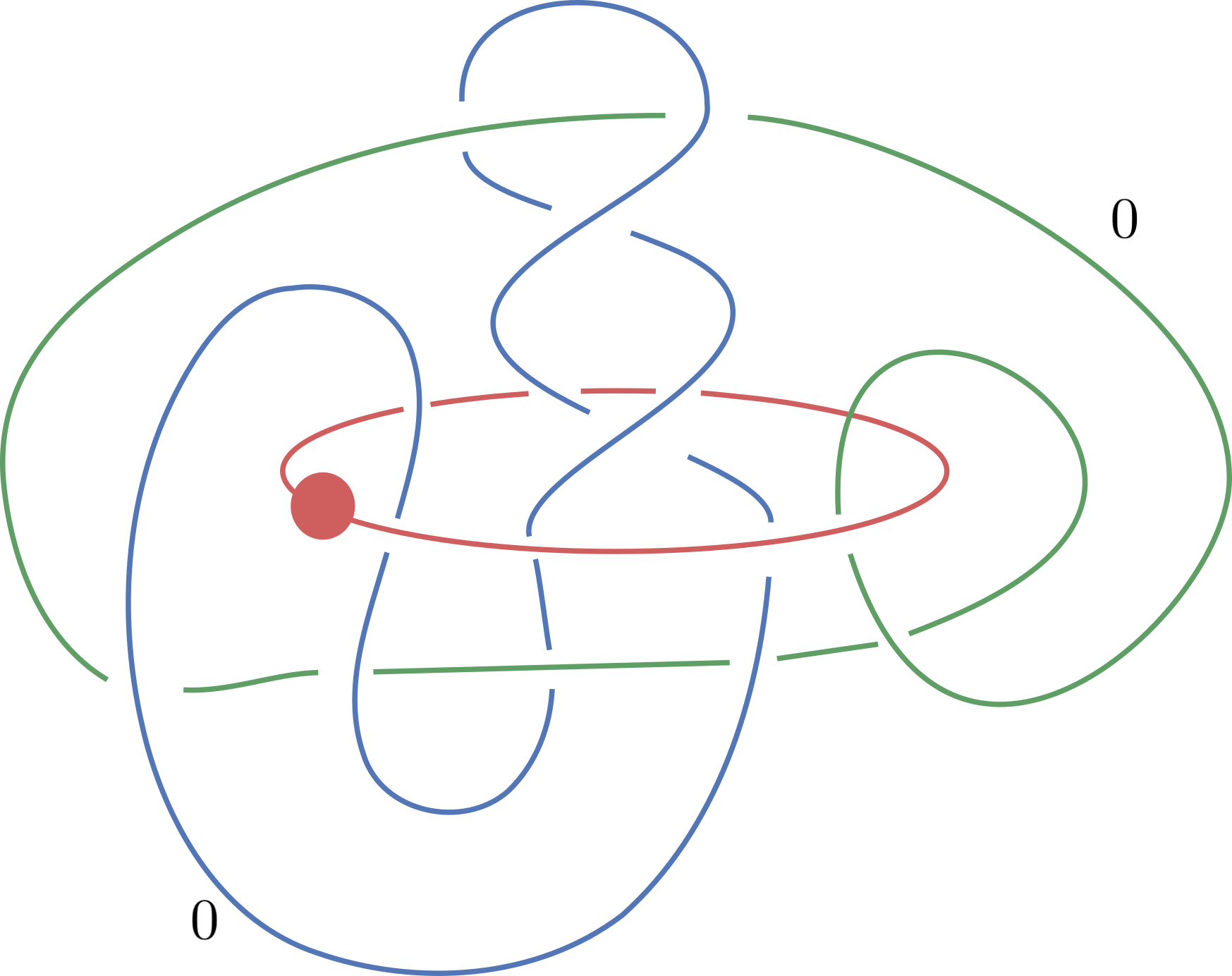}
\caption{}
\label{fig:splithyp}
\end{figure}

\begin{rmk}
The split hypothesis of Theorem \ref{thm:tracesconc} is essential. For example, consider the handle diagram $L$ in Figure \ref{fig:splithyp} and let $K$ be the knot obtained from $L$ as in Theorem \ref{thm:diffeotraces}. $K$ is isotopic to the pretzel knot $P(5,-3,-3)$, which is not slice. 
\end{rmk}

\section{Rasmussen's invariant calculations}

In \cite{Kho00}, Khovanov introduced a link invariant which takes the form of a bigraded abelian group. We refer to this group as the Khovanov homology $Kh^{i,j}(L)$, and will sometimes find it convenient to refer to the Poincare polynomial of this group, which is $$Kh(L)(q,t):=\sum_{i,j}q^jt^i \text{rank}(Kh^{i,j}(L))$$ Later, Lee showed that $Kh(L)$ can be viewed as the $E_2$ page of a spectral sequence which converges to $\bQ\oplus\bQ$ \cite{Lee02} and Rasmussen used this to define an integer valued knot invariant $s(K)$ \cite{Ras10}. It will suffice for this work to recall the following properties of $s(K)$. 
\begin{thm}[\cite{Ras10}]\label{thm:sproperties} For any knot $K$ in $S^3$, the following hold:
\begin{enumerate}
\item \hspace{10pt}
$|s(K)|\le 2g_4(K)$
\item \hspace{10pt} The map $s$ induces a homomorphism from $\sC$ to $\bZ$.
\item \hspace{10pt} $\text{rank}(Kh(K)^{0,s(K)\pm 1})\neq 0$
\end{enumerate}
\end{thm}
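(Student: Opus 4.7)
The statement collects three results from Rasmussen's original paper, so the plan is to follow his approach via Lee's deformation of Khovanov homology. First I would introduce Lee's perturbed differential $d_{Lee} = d_{Kh} + \Phi$ on the Khovanov complex $CKh(K)$, where $\Phi$ is the map obtained by modifying the Frobenius algebra structure from $x^2 = 0$ to $x^2 = 1$. This differential does not preserve the quantum grading $j$ but only a filtration by $j$, and one checks by direct computation that for any knot $K$, $H_*(CKh(K), d_{Lee}) \cong \bQ \oplus \bQ$, generated by two canonical classes $\mathfrak{s}_o, \mathfrak{s}_{\bar o}$ coming from the two orientations of $K$. One then defines $s(K)$ so that the quantum filtration levels of $\mathfrak{s}_o + \mathfrak{s}_{\bar o}$ and $\mathfrak{s}_o - \mathfrak{s}_{\bar o}$ are $s(K)-1$ and $s(K)+1$ respectively.

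For part (1), the key input is functoriality: a smoothly embedded, properly immersed cobordism $\Sigma \subset S^3 \times [0,1]$ from $K_0$ to $K_1$ induces a chain map $F_\Sigma : CKh(K_0) \to CKh(K_1)$ that shifts the quantum filtration by $\chi(\Sigma)$. Applying this to a genus $g_4(K)$ cobordism from $K$ to the unknot $U$ (and its reverse), and using that $s(U) = 0$, forces $|s(K)| \le 2 g_4(K)$. The hard part here is verifying the filtration shift formula from Reidemeister and Morse move invariance of Lee's complex — this is where one must track the behavior of each elementary cobordism.

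For part (2), one establishes $s(K_1 \# K_2) = s(K_1) + s(K_2)$ by analyzing Lee's canonical generators on a connected sum: the tensor product structure of $CKh(K_1 \# K_2)$ relative to $CKh(K_1) \otimes CKh(K_2)$ matches up the orientation generators in a grading-compatible way, up to the correction prescribing $s$. Mirror symmetry plus duality of Khovanov homology gives $s(-K) = -s(K)$. Concordance invariance is then immediate from (1), since a concordance is a genus-zero cobordism; additivity plus concordance invariance yield the homomorphism $\sC \to \bZ$.

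For part (3), the plan is to exhibit nonzero classes in $Kh^{0, s(K)\pm 1}(K)$ by inspecting the spectral sequence from $Kh(K)$ to $Kh_{Lee}(K) = \bQ \oplus \bQ$. The generators $\mathfrak{s}_o \pm \mathfrak{s}_{\bar o}$ lie at homological grading $0$ and have quantum filtration exactly $s(K) \mp 1$, so they survive to the $E_\infty$ page at those bigradings; this forces nonzero rank at the $E_\infty$ (hence $E_2 = Kh$) level at $(0, s(K)\pm 1)$. The main obstacle across all three parts is the careful bookkeeping of the quantum filtration through Reidemeister moves and elementary cobordisms, but since the theorem is stated as a citation of \cite{Ras10}, in practice I would simply quote these three properties rather than reprove them.
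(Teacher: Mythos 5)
Your outline is a faithful summary of Rasmussen's arguments (Lee's deformation, the canonical orientation classes, the filtration shift by $\chi(\Sigma)$ under cobordism maps, additivity under connected sum, and the survival of the generators at bidegrees $(0, s\pm 1)$ in the spectral sequence), which is exactly what the paper does implicitly, since it states this theorem as a citation of \cite{Ras10} and offers no proof of its own. The only quibble is the phrase ``properly immersed'' where you mean properly \emph{embedded} cobordism; otherwise the sketch is correct and, as you note, in this paper one simply quotes these three properties.
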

\begin{cor}[\cite{Ras10}]\label{cor:scrossingchange}
Suppose $K_+$ and $K_-$ are knots that differ by a single crossing change, from a positive crossing in $K_+$ to a negative one in $K_-$. Then $s(K_-)\le s(K_+)\le s(K_-)+2$. 
\end{cor}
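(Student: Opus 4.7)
I plan to split the corollary into two inequalities: the symmetric two-sided bound $|s(K_+)-s(K_-)|\le 2$, and then the one-sided refinement $s(K_-)\le s(K_+)$. The first follows from a cobordism construction plus additivity; the second needs deeper Khovanov-theoretic input.

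For the two-sided bound, the plan is to build a genus-one oriented cobordism $\Sigma\subset S^3\times[0,1]$ between $K_+$ and $K_-$. Outside a small ball $B$ around the distinguished crossing, take $\Sigma$ to be the product cobordism; inside $B$, factor the crossing change through the oriented resolution $L_0$ of the crossing by two consecutive saddle moves, one from $K_+$ to $L_0$ and one from $L_0$ to $K_-$. An Euler characteristic count gives $\chi(\Sigma)=-2$, so $\Sigma$ has genus one with exactly two boundary circles. Capping $\Sigma$ along $K_-$ yields a genus-one properly embedded surface in $B^4$ bounded by $K_+\#(-K_-)$, so Theorem \ref{thm:sproperties}(1) and (2) give
\[|s(K_+)-s(K_-)|=|s(K_+\#(-K_-))|\le 2\,g_4(K_+\#(-K_-))\le 2,\]
which already yields $s(K_+)\le s(K_-)+2$.

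The remaining inequality $s(K_-)\le s(K_+)$ is the part of the corollary that cannot be recovered from a symmetric cobordism argument, since it genuinely depends on the sign of the crossing being changed. My plan here is to invoke Khovanov's skein long exact sequence at the distinguished crossing, relating $Kh(K_+)$, $Kh(K_-)$, and $Kh(L_0)$, with grading shifts dictated by the fact that the crossing is positive in $K_+$. Passing to the Lee deformation of this triangle and using Theorem \ref{thm:sproperties}(3) to locate $s(K_\pm)$ via the $q$-degrees in which the surviving Lee classes sit, a careful tracking through the induced exact sequence on the Lee page forces the Lee generator of $K_-$ to lie in quantum degree at most $s(K_+)+1$, i.e.\ $s(K_-)\le s(K_+)$.

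The main obstacle is the one-sided inequality: the symmetric bound is essentially formal from the cobordism construction and the standard properties of $s$, but the asymmetric refinement requires genuine Khovanov-theoretic input beyond the slice-genus bound, specifically the grading behavior of the skein triangle at a positive crossing and its compatibility with the Lee spectral sequence. The cobordism construction itself is routine and serves only as warm-up for the grading bookkeeping in the second half.
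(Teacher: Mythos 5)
First, a point of comparison: the paper does not prove this statement at all --- it is quoted directly from \cite{Ras10} (it is Corollary~4.3 there), so there is no in-paper argument to measure you against. Your first half is correct and standard: the two saddles through the oriented resolution $L_0$ give a connected genus-one cobordism from $K_+$ to $K_-$, which converts (by the usual trick of deleting a neighborhood of a vertical arc, not literally ``capping along $K_-$'') into a genus-one surface in $B^4$ bounded by $K_+\#(-K_-)$; parts (1) and (2) of Theorem~\ref{thm:sproperties} then give $|s(K_+)-s(K_-)|\le 2$, hence the right-hand inequality.

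The left-hand inequality $s(K_-)\le s(K_+)$ is the entire content of the corollary beyond this formal bound, and your treatment of it is a plan with a concrete obstruction rather than a proof. There is no skein exact triangle relating $Kh(K_+)$, $Kh(K_-)$ and $Kh(L_0)$: the unoriented skein triangle relates a diagram to its \emph{two resolutions} at a crossing, and $K_-$ is not a resolution of $K_+$. One can chain the two triangles (for $D_+$ and for $D_-$) through the common resolution, but the resulting connecting maps are not cobordism maps, the Lee theory is filtered rather than graded so the triangle does not pass to the ``Lee page'' with controlled $q$-shifts for free, and Theorem~\ref{thm:sproperties}(3) is a one-way statement: it locates where the surviving classes sit once $s$ is known, but does not let you read $s$ off an exact sequence at $E_2$ without controlling all higher differentials. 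Most tellingly, exactness is symmetric in $K_+$ and $K_-$ up to grading shifts, so the asymmetry --- the whole point of the inequality --- must enter somewhere you have not identified. Rasmussen's argument instead runs through the functorial maps that the oriented saddle cobordisms between $K_\pm$ and $L_0$ induce on Lee homology, together with his explicit determination of where these maps send the canonical generators $\mathfrak{s}_o$ and of the filtration levels of those generators for the two-component link $L_0$; this is where the sign of the crossing is actually used. Since the paper only quotes the result, the appropriate course is to cite \cite{Ras10} for this half rather than to supply the sketch as a proof.
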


\begin{figure}[]
\includegraphics[width=6cm]{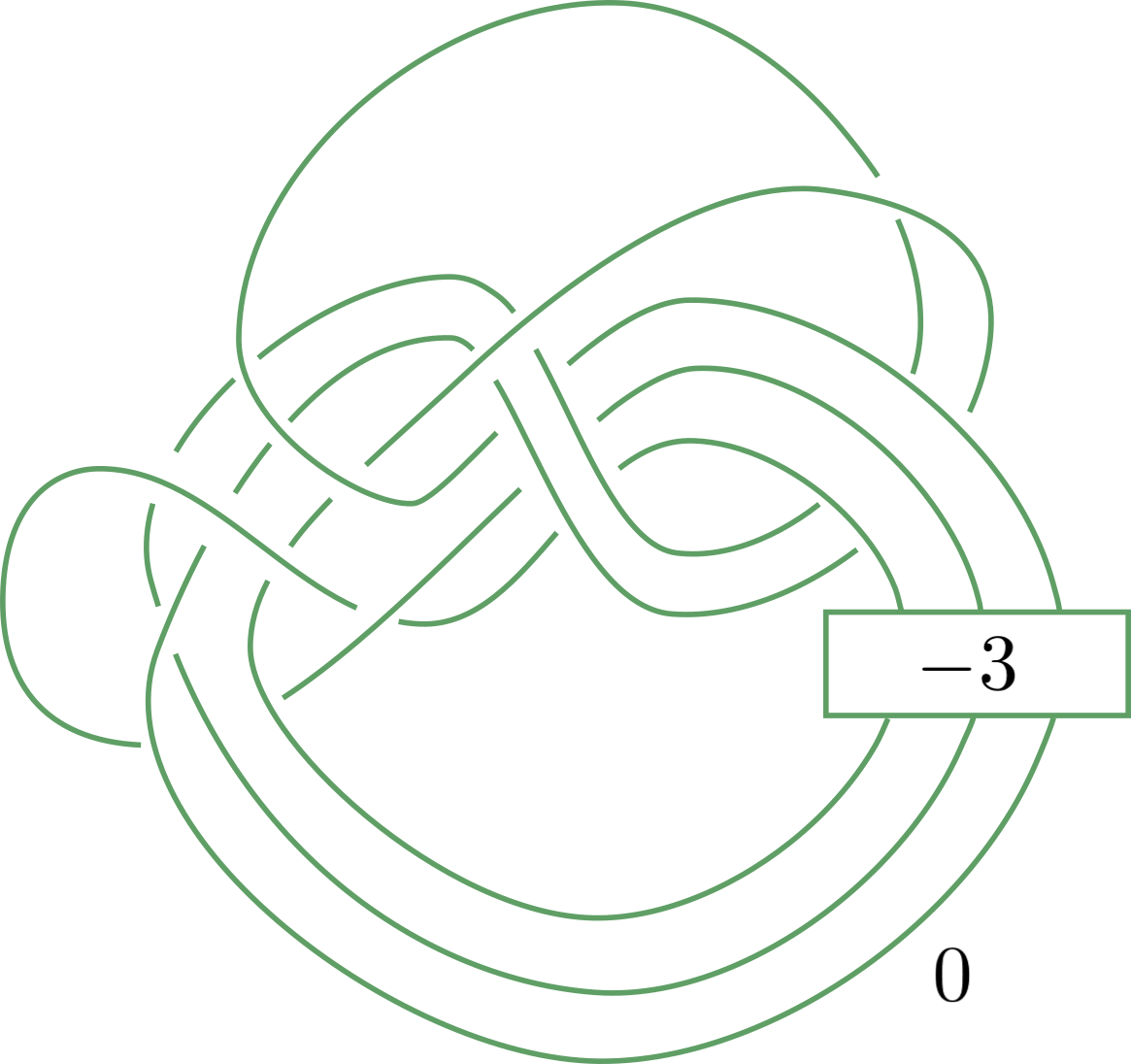}
\caption{}
\label{fig:Kmprimereduced}
\end{figure}

\begin{proof}[Proof of Theorem \ref{Thm:genusgap}]
For a fixed $m\le 0$ let $K_m$ and $K'_m$ be the knots from Example \ref{ex:Km}. By Theorem \ref{thm:diffeotraces} $X_0(K_m)\cong X_0(K'_m)$, and as remarked in Example \ref{ex:Km}, $g_4(K_m)=1$ for all $m$. 
For $m\le 0$ we will bound the slice genus of $K'_m$ from below by bounding $s(K'_m)$ from below. See Figure \ref{fig:Kmprimereduced} for a somewhat reduced diagram of $K'_0$ with approximately 40 crossings. We make use of the JavaKh routines, available at \cite{KAT} to compute $Kh^{i,j}(K'_0)$. These routines produce Poincare polynomial
\begin{equation}
\begin{split}
Kh(K'_0)(q,t)=&q^{47} t^{27}+q^{43} t^{26}+q^{41} t^{24}+q^{41} t^{23}+q^{39} t^{22}+q^{37} t^{23}+q^{37} t^{22}+q^{37} t^{20}+q^{35} t^{21}\\+
&q^{35} t^{20}+2 q^{33} t^{19}+q^{33} t^{18}+q^{31} t^{19}+q^{31} t^{17}+2 q^{31} t^{16}+q^{29} t^{18}+q^{29} t^{17}+q^{29} t^{15}\\+
&q^{27} t^{16}+3 q^{27} t^{15}+q^{27} t^{14}+q^{25} t^{14}+2 q^{25} t^{13}+q^{25} t^{12}+q^{23} t^{14}+q^{23} t^{13}+q^{23} t^{12}\\
&+q^{23} t^{11}+2 q^{21} t^{12}+2 q^{21} t^{11}+q^{19} t^{11}+q^{19} t^{10}+q^{19} t^9+q^{19} t^8+q^{17} t^{10}+q^{17} t^8\\
&+q^{17} t^7+q^{15} t^8+q^{15} t^7+q^{13} t^7+q^{13} t^6+q^{13} t^5+q^{13} t^4+q^{11} t^4+q^{11} t^3\\
&+q^9 t^4+2 q^9 t^3+q^9 t^2+q^7 t^3+q^7 t^2+q^7 t+q^5 t^2+q^5 t+q^5+\frac{q^3}{t}+2 q^3+\frac{1}{q t^2}\\
\end{split}
\end{equation}

Then, by item 3 of Theorem  \ref{thm:sproperties} we have $s(K'_0)=4$, and by Corollary \ref{cor:scrossingchange} we have $s(K'_m)\ge 4$ for all $m\le 0$. We conclude by appealing to item 1 of  Theorem \ref{thm:sproperties}. 
\end{proof}
\begin{rmk}
It is not hard to check that $g_4(K'_m)\le 2$ for all $m\in\bZ$. Hence all bounds in the above proof are sharp. 
\end{rmk}
We now produce another isolated example of a pair of knots $K, K'$ with $X_0(K)\cong X_0(K')$ and $g_4(K)\neq g_4(K')$. This example could also be expanded to infinite families as done with Example \ref{ex:Km}, but we do not pursue that here. This and the the knots in Example \ref{ex:Km} are only special in that they have $K'$ with reasonably small crossing number, allowing us to compute $Kh(K')$. We anticipate that Theorem \ref{thm:tracesconc} can be used to give abundant examples of knots $K, K'$ with diffeomorphic traces and distinct slice genera. 

\begin{ex}
\begin{figure}[]
\includegraphics[width=13cm]{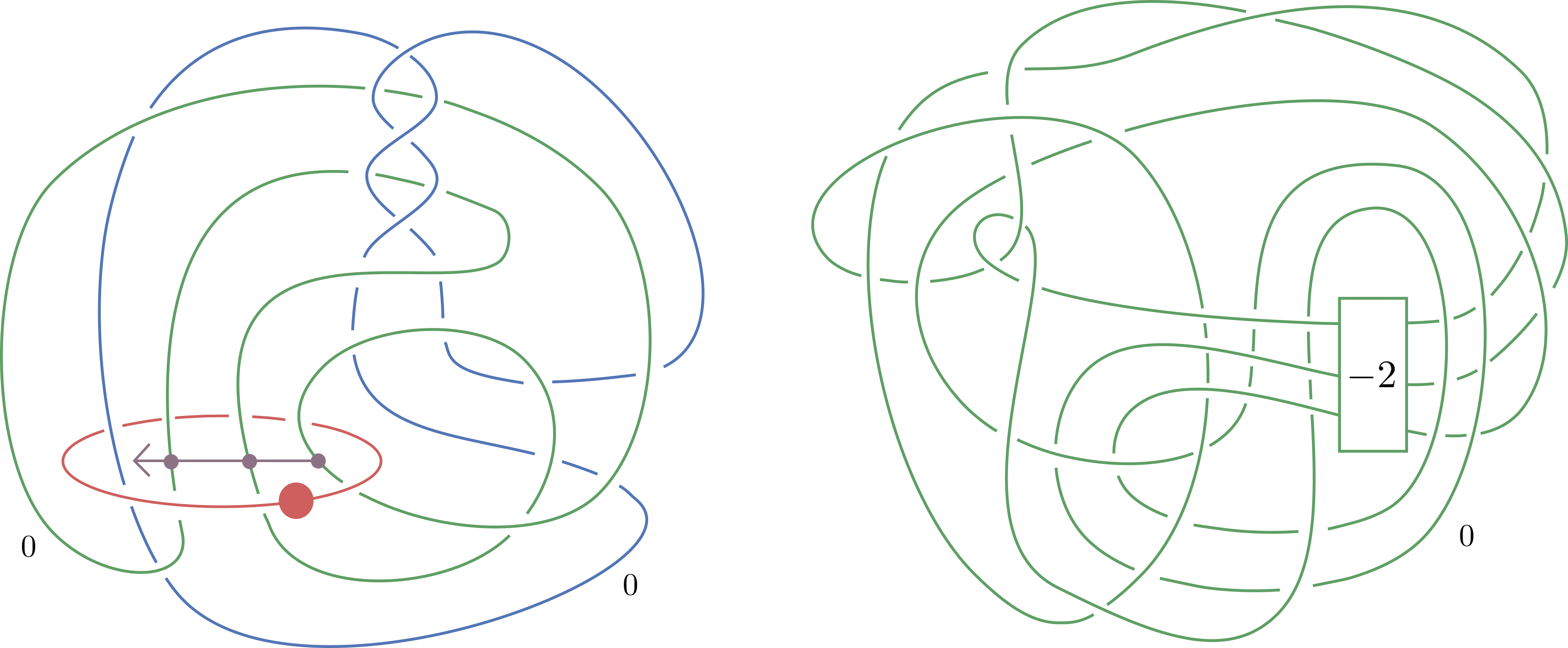}
\caption{}
\label{fig:Ex3}
\end{figure}
Let $L$ be the decorated link in the left hand side of Figure \ref{fig:Ex3}. By Theorems \ref{thm:diffeotraces} and \ref{thm:tracesconc}, $L$ gives a handle decomposition for $X\cong X_0(K)$ where $K$ is concordant to the right hand trefoil. By performing the slides indicated in the left hand side of Figure \ref{fig:Ex3} one obtains a knot $K'$ with $X\cong X_0(K')$, not pictured.  In the right hand side of Figure \ref{fig:Ex3} we give a diagram of a knot $K''$ with fewer crossings; we claim $K''$ is isotopic to $K'$. The isotopy between the diagram of $K'$ described and the diagram of $K''$ given is non-trivial, we provide two options for the careful reader to confirm the existence of an isotopy. First, they can use Snappea \cite{Wee01} to confirm that the diagrams present isotopic knots. Alternatively they can use the Kirby calculator \cite{Swe11} to view the diffeomorphism $f: X_0(K')\to X_0(K'')$ which we have located on the author's website \cite{Pic18} and can check that this diffeomorphism sends a 0-framed copy of $\mu_{K'}$ to $\mu_{K''}$. We warn that the diffeomorphism $f$ is tedious. 

Having confirmed that $K'\simeq K''$ it suffices to compute $s(K'')$. As before we rely on the JavaKh routines and item 3 of \ref{thm:sproperties}. We suppress the output of JavaKh and present the conclusion, which is that $s(K'')\in\{4,6\}$. Since one checks that $g_4(K')\le 2$ we conclude $s(K')=4$ and $g_4(K')=2$. 
\end{ex}

\section{Bijective maps on $\sC$ which do not preserve slice genus, and some satellite homomorphism obstructions}\label{sec:maps}

\subsection{Definitions and notation}

Let $P:\sqcup_n S^1\to V$ be an oriented link in a parametrized solid torus $V:=S^1 \times D^2$.  By the usual abuse of notation, we use $P$ to refer to both this map and its image.  Define $\lambda_V=S^1\times \{x_0\}$ for some $x_0\in\partial D^2$, oriented so that $P$ is homologous to a non-negative multiple $n$ of $\lambda_V$. We call $n$ the \emph{(algebraic) winding number of} $P$. Define the \emph{geometric winding number} of $P$ to be the minimal number of intersections of $P$ with the meridional disk for $V$ over all patterns in the isotopy class of $P$. 

Given a pattern $P: S^1 \to V$, define $\overline{P}$ to be the pattern obtained from $P$ by reversing the orientation of both $S^1$ and $V$; note that $\overline{P}$ has diagram obtained from a diagram of $P$ by changing all crossings and the orientation of $P$. 

For any knot $K$ in $S^3$ there is a canonical embedding $i_K:V \to S^3$ given by identifying $V$ with $\overline{\nu(K)}$ such that $\lambda_V$ is sent to the null-homologous curve on $\partial\overline{\nu(K)}$. Then $i_K \circ P:S^1\to S^3$ specifies an oriented knot in $S^3$, denoted $P(K)$ and called the \emph{satellite of $K$ by $P$}. As such the pattern $P$ gives a map from $\{$knots in $S^3\}\to \{$knots in $S^3\}$, which we will also refer to as $P$. It is not hard to show that $P$ descends to a well-defined map $P:\sC\to\sC$.

\subsection{Bijective operators not preserving slice genus}

\begin{thm}[Proposition 6.13 of \cite{CR16}]\label{thm:CRiff}
For a knot $K$ in $S^3$, $g_{sh}^0(K)=g_4(K)$ if and only if $g_4(P(K))\ge g_4(K)$ for all winding number one satellite operators $P$ with $P(U)$ slice.
\end{thm}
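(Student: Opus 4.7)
For the forward direction, I would give a direct geometric construction. Given a minimum-genus slice surface $\Sigma \subset B^4$ for $P(K)$ of genus $g_4(P(K))$, I extend it to a closed shake surface in $X_0(K) = B^4 \cup_{\nu(K)} h^2$. Under the canonical diffeomorphism $h^2 \cong D^2 \times D^2 \cong B^4$ sending the attaching circle $K$ to the unknot, the curve $P(K) \subset \partial h^2$ corresponds to $P(U) \subset \partial B^4$, so a slice disk for $P(U)$ provides a genus-$0$ surface $D \subset h^2$ with $\partial D = P(K)$. The closed surface $\Gamma := \Sigma \cup D$ has genus $g_4(P(K))$; its algebraic intersection with the cocore $\Delta$ equals the linking number of $P(K)$ with $\mu_K$ in $\partial h^2$, which is the winding number of $P$ (namely $1$), so $\Gamma$ generates $H_2(X_0(K))$. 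Hence $g_{sh}^0(K) \le g_4(P(K))$, and the hypothesis $g_{sh}^0(K) = g_4(K)$ then gives $g_4(K) \le g_4(P(K))$.

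For the backward direction, I invert the construction to extract a pattern from a shake surface. Let $\Gamma$ be a minimum-genus shake surface of genus $g = g_{sh}^0(K)$; since $g_{sh}^0(K) \le g_4(K)$ always holds, it suffices to find $P$ (winding number one, $P(U)$ slice) with $g_4(P(K)) \le g$, for then the hypothesis yields $g_4(K) \le g$. Isotope $\Gamma$ so that $\Gamma \cap h^2$ is a disjoint union of $n$ parallel cores of $h^2$; then $\Sigma := \Gamma \cap B^4$ is a connected genus-$g$ surface bounding an $n$-component link $L \subset \nu(K)$ of $0$-framed parallels of $K$, with orientations summing to algebraic winding number $1$. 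Take $P$ to be the pattern in $V$ given by $n$ parallel copies of $\lambda_V$ with matching orientations; then $L = P(K)$, the image $P(U)$ is an $n$-component unlink (hence slice), and $\Sigma$ witnesses $g_4(P(K)) \le g$. The hypothesis then yields $g_4(K) \le g$, completing the argument.

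The main technical point concerns the interpretation of ``satellite operator'' in the hypothesis. The paper's general definition of a pattern as an oriented link in $V$ permits multi-component $P$, under which the backward argument above is direct. If instead ``satellite operator'' is restricted to single-component (knot) patterns, then the link $L$ must first be merged into a single knot $K^* = P(K)$ by attaching $n-1$ bands inside $V$; these bands can be chosen so that $P(U)$ remains slice (the $n$-unlink together with $n-1$ merging bands bounds a disk in $B^4$), but naively they raise the genus of the resulting slice surface to $g + (n-1)$. Bridging this gap in the knot-pattern setting would require a more delicate argument---likely a handle-theoretic or trace-embedding construction in the spirit of Theorem \ref{thm:diffeotraces} relating $X_0(K) \cong X_0(K^*)$ directly to a pattern realization---and I expect this to be the technical heart of the proof.
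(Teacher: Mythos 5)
First, note that the paper offers no proof of this statement: it is quoted directly from Cochran--Ray \cite{CR16} (their Proposition 6.13), so the only thing to compare against is their argument. Your forward direction is correct and is the standard one: a minimal genus surface for $P(K)$ in $B^4$, capped off with a slice disk for $P(U)$ pushed into the $2$-handle, yields a closed surface meeting the cocore algebraically in $w=1$ points, hence a genus-$g_4(P(K))$ generator of $H_2(X_0(K))$.

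The backward direction, however, contains exactly the gap you flag, and it cannot be dodged by allowing multi-component patterns: the statement is needed (both in \cite{CR16} and in this paper, e.g.\ to produce the knot operators $J$ with $J(U)\sim U$ in Corollary \ref{cor:satop}) for honest knot patterns. The missing idea is that you should not \emph{attach} the $n-1$ merging bands to $\Sigma:=\Gamma\cap B^4$, which as you compute costs $n-1$ in genus; you should instead \emph{cut} $\Sigma$ along $n-1$ disjoint properly embedded arcs in $\Sigma$ joining its $n$ boundary components into a tree. Deleting a neighborhood in $\Sigma$ of such an arc system leaves a connected subsurface of the same genus $g$ with a single boundary component; isotoping that boundary (equivalently the arcs, which are unknotted rel endpoints in $B^4$) out to $\nu(K)\subset S^3$ exhibits a properly embedded genus-$g$ surface whose boundary is the link $L$ of parallel copies of $K$ banded together along $n-1$ bands lying in $\nu(K)$. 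The resulting knot is $P(K)$ for a winding number one pattern $P$ given by $n$ parallel copies of the core of $V$ (signs summing to $1$) joined by $n-1$ bands, so $P(U)$ is a band sum of components of an unlink and hence ribbon, in particular slice. This yields $g_4(P(K))\le g_{sh}^0(K)$ for some admissible $P$, which together with the hypothesis and the trivial inequality $g_{sh}^0(K)\le g_4(K)$ forces equality. In short, the bands are not chosen freely and then paid for in genus; they are read off from the surface $\Gamma$ itself, and the slice surface for $P(K)$ is a subsurface of $\Gamma$ rather than an enlargement of it.
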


If one ignores the `bijective' conclusion, then Corollary \ref{cor:satop} follows immediately from Corollary \ref{cor:shakenotslice} and Theorem \ref{thm:CRiff}. It is also possible, though quite tedious, to use the techniques of Cochran-Ray's proof of Theorem \ref{thm:CRiff} to construct an explicit pattern $Q_m$, not necessarily bijective, which lowers the slice genus of the knots $K_m'$ from the proof of Theorem \ref{Thm:genusgap}. Instead, we show in this section that dualizable patterns, a classical technique for constructing knots with diffeomorphic 0-traces, readily yield such a $Q_m$ which is bijective. 

In order to prove Corollary \ref{cor:satop} and give the examples, we state the facts we require about dualizable patterns now. The proof of Proposition \ref{prop:dualequiv} requires recalling the dualizable patterns construction and is thus located in the Appendix. It will suffice for this work to recall the following properties of dualizable patterns; we give the definition in the Appendix. 

\begin{prop}\label{prop:dualequiv}For any knots $K, K'$ that arise from Theorem \ref{thm:diffeotraces} there exists a dualizable pattern $P$ with dual pattern $P^*$ such that $P(U)\simeq K$ and $P^*(U)\simeq K'$. Conversely for any dualizable pattern $P$ one can express $P(U)\simeq K$ and $P^*(U)\simeq K'$ with $K$ and $K'$ arising as in Theorem \ref{thm:diffeotraces}.
\end{prop}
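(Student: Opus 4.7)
My plan is to exhibit the three-component link datum of Theorem \ref{thm:diffeotraces} and the datum of a dualizable pattern as two equivalent bookkeepings of the same handle structure, so that the proposition reduces to a dictionary between the two descriptions. This dictionary is most naturally set up in the Appendix alongside the formal definition of a dualizable pattern.

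For the forward direction, suppose $K$ and $K'$ arise from a link $L = R \cup B \cup G$ satisfying the hypotheses of Theorem \ref{thm:diffeotraces}. Since $B \cup R \simeq B \cup \mu_B$ forces $R$ to be unknotted in $S^3$, the exterior $V := S^3 \setminus \nu(R)$ is a standardly embedded solid torus. I would then package the remaining data into a pattern and a dualizing curve: one of $B, G$ becomes the pattern $P \subset V$, and the other becomes the dualizing curve. The conditions of Theorem \ref{thm:diffeotraces} are tailored exactly so that this packaging is well-defined. The isotopies $B \cup R \simeq B \cup \mu_B$ and $G \cup R \simeq G \cup \mu_G$ ensure that each of $B, G$ has winding number $\pm 1$ in $V$ and meets the meridional disk of $V$ algebraically once, while the condition $lk(B, G) = 0$ ensures that the dualizing curve is nullhomologous in the complement of the pattern. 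These are precisely the defining axioms of a dualizing curve. The sequence of handle slides and $1$-$2$ handle cancellations in the proof of Theorem \ref{thm:diffeotraces} (slide $G$ over $B$ until $G \cap D_R = \emptyset$, then cancel $B$ against $R$) is then read off as the Kirby-calculus realization of the satellite operation $U \mapsto P(U)$, giving $P(U) \simeq K$; swapping the roles of $B$ and $G$ yields the dual pattern $P^*$ with $P^*(U) \simeq K'$.

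For the converse, I would simply run the construction in reverse. Starting from a dualizable pattern $(P, \eta)$ in $V$, embed $V$ in $S^3$ as the exterior of an unknot, draw that unknot as a dotted circle $R$, and let $B$ and $G$ be the images of $P$ and $\eta$ (or of $\eta$ and $P$, depending on which assignment makes $P(U) \simeq K$). The dualizability axioms translate immediately into the three hypotheses of Theorem \ref{thm:diffeotraces}, and the same handle-slide argument identifies the resulting knots with $P(U)$ and $P^*(U)$.

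I expect the main obstacle to be notational rather than conceptual: one must carefully match orientation and framing conventions through the identifications of solid tori, keep track of which curve plays the pattern versus dualizing role, and verify that the Kirby moves of Theorem \ref{thm:diffeotraces} implement the satellite operation on the nose. Because these conventions are inseparable from the definition of a dualizable pattern itself, the cleanest execution is to present the definition, the dictionary, and the verification together in the Appendix.
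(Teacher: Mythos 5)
Your high-level framing---that the link $L=R\cup B\cup G$ and a dualizable pattern are two bookkeepings of the same handle structure---matches the spirit of the paper's Appendix, but the specific dictionary you propose is not the right one, and it breaks exactly where the content of the proposition lives. You take the pattern solid torus to be $V=S^3\setminus\nu(R)$ with $B$ (say) as the pattern. But that $V$ is already embedded in $S^3$ as a standard neighborhood of an unknot, so with your packaging $P(U)$ is just the knot $B\subset S^3$, which is not $K$ in general: $K$ is the image of $B$ after the slides over $G$, and Theorem \ref{thm:tracesconc} only gives $K\sim B$ up to concordance, and only under additional hypotheses. The correct solid torus is $S^1\times S^2\setminus\nu(G)$, where $S^1\times S^2$ is the boundary of the $0$- and $1$-handles (equivalently, $0$-surgery on $R$); one must first isotope $G$ to $S^1\times\{pt\}$ and correct the induced parametrization so that the framing on $G$ is $0$ and $B$ misses $D_G$---this is the ``$r$ slides, then $-r$ slides'' step of Figure \ref{fig:framingcorrect}, which is genuine content, not notation. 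Relatedly, your claim that the hypotheses of Theorem \ref{thm:diffeotraces} are ``precisely the defining axioms'' because they give winding number $\pm1$ and $\mathrm{lk}(B,G)=0$ replaces isotopy conditions by their homological shadows: a generic winding number one pattern is not dualizable. What dualizability actually requires is that $\widehat{B}$ and $\widehat{G}$ be isotopic to $S^1\times\{pt\}$ in $S^1\times S^2$, and for that one needs the full strength of the link isotopies $B\cup R\simeq B\cup\mu_B$ and $G\cup R\simeq G\cup\mu_G$, not just the linking numbers.

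Even with the correct solid torus, ``reading off the Kirby moves as the satellite operation'' is an assertion that needs proof. The paper argues it indirectly: it builds diffeomorphisms $X\to X_0(K)$ and $X\to X_0(P(U))$ and checks that the composite boundary diffeomorphism $S^3_0(K)\to S^3_0(P(U))$ carries the surgery dual of $K$ to that of $P(U)$ preserving the surgery framings; undoing the surgeries yields a diffeomorphism of $S^3$ taking $K$ to $P(U)$, whence $K\simeq P(U)$. Some such argument (or an explicit tracking of the satellite structure through every slide and cancellation) must be supplied; as written, your argument would conclude $P(U)\simeq B$, which contradicts the paper's examples where $K\not\simeq B$. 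A smaller slip: in the proof of Theorem \ref{thm:diffeotraces}, sliding $G$ over $B$ and cancelling $B$ with $R$ produces $K'$, not $K$.
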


The proof of the above is constructive; in particular given $K, K'$ the proof illustrates how to write down an associated dualizable pattern $P$. 

\begin{thm}[Proposition 2.4 of \cite{GM95}, Theorem 1.12 of \cite{MP17}]\label{thm:bijective}
For any dualizable pattern $P$ and knot $K$ in $S^3$, we have
$$P(\overline{P^*}(K))\sim K\sim \overline{P^*}(P(K))$$ In other words, dualizable patterns induce bijective maps on $\sC$. 
\end{thm}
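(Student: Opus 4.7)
The plan is to reduce the theorem to a single concordance construction by symmetry and then exhibit that concordance explicitly using the link $L = R \cup B \cup G$ supplied by Proposition \ref{prop:dualequiv}. First I will note that the dualizable relation is symmetric: if $P$ has dual $P^*$, then $P^*$ has dual $P$ (indeed the construction of Proposition \ref{prop:dualequiv} is symmetric in $B$ and $G$), so substituting $P \leftrightarrow P^*$ turns one of the two stated concordances into the other. It therefore suffices to show $\overline{P^*}(P(K)) \sim K$ for every knot $K$.

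Next I will make the composite satellite concrete using the link $L$. By Proposition \ref{prop:dualequiv}, $P(U) \simeq K_L$ and $P^*(U) \simeq K_L'$, where $K_L$ and $K_L'$ are the two knots extracted from $L$ by the handle slides in Theorem \ref{thm:diffeotraces}. For general $K$, I can describe $P(K)$ by the same recipe but with the $0$-framed $2$-handle on $G$ replaced by the $0$-framed $2$-handle on $K$ before performing the cancellation, and symmetrically for $P^*(K)$ with the roles of $B$ and $G$ swapped. Then $\overline{P^*}(P(K))$ is obtained by nesting two such pictures: form $P(K)$ first, take a tubular neighborhood of it, and insert a mirrored-and-orientation-reversed copy of the $P^*$-link-data into that neighborhood. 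Tracking meridians, framings, and the effect of the mirror and orientation reversal carefully, this yields a single diagram in which $K$ is surrounded by two linked copies of $L$-data related by the duality symmetry.

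The main step, and the expected main obstacle, is to show that this stacked configuration admits a geometric cancellation: the outer $1$- and $2$-handles arising from $\overline{P^*}$ should cancel in pairs against the inner handles from $P$, leaving $K$ in its original tubular neighborhood together with a collection of bands that assemble into a genus-zero cobordism. The mirror operation inside $\overline{P^*}$ is precisely what allows these bands to realize a concordance rather than a higher-genus cobordism, because it reverses the orientation data needed to match the inverse of the handle moves used to produce $K_L$ from $L$. I would formalize this by writing the stacked configuration as a $4$-dimensional movie, identifying a pair of $(1,2)$-handle cancellations that reduce the movie to $K \times [0,1]$ together with an annular region connecting $K$ to $\overline{P^*}(P(K))$, which is the desired concordance. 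As a fallback, if the explicit handle calculus becomes unwieldy, I would appeal to the classical dualizable-pattern formalism of Gompf--Miyazaki, where the inverse property is encoded in the fact that the pattern complement $V \setminus \nu(P)$ admits two complementary solid-torus structures whose cores are $P$ and $P^*$; the concordance then comes from gluing the two complementary product cobordisms along their common boundary torus.
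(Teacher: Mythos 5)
First, a point of comparison: the paper does not prove Theorem \ref{thm:bijective} at all; it is imported with citations to \cite{GM95} and \cite{MP17}. So your proposal is not an alternative to an internal argument but an attempt to supply one, and it has to be judged on its own completeness. Your symmetry reduction is acceptable in outline, but note that the naive swap $P\leftrightarrow P^*$ applied to $\overline{P^*}(P(K))\sim K$ yields $\overline{P}(P^*(K))\sim K$, which is the mirror-reverse of the first identity rather than the identity itself; to close the loop you need that $\overline{P^*}$ is itself dualizable with dual $\overline{P}$ (equivalently $(\overline{P^*})^*=\overline{P}$) and that mirror-reversal commutes with the satellite operation. These facts are true and are established in \cite{MP17}, but they are not automatic from ``duality is symmetric.''

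The genuine gap is in your main step. A concordance $\overline{P^*}(P(K))\sim K$ is an embedded annulus in $S^3\times[0,1]$, and cancelling $(1,2)$-handle pairs in a stacked handle diagram produces a diffeomorphism of four-manifolds (e.g.\ of $0$-traces), not such an annulus. Trace diffeomorphisms by themselves only control the closed-up knots $P(U)$ and $P^*(U)$ (this is exactly why the paper needs the extra hypotheses of Theorem \ref{thm:tracesconc} -- that $G\sim U$ and $B\cup G$ is split -- to extract even the single concordance $K\sim B$); they say nothing directly about the composite $\overline{P^*}(P(K))$ for an arbitrary companion $K$. What the cited proofs actually establish is a concordance of \emph{patterns}: the composite pattern $P\circ\overline{P^*}$ is concordant in $V\times[0,1]$ to the core $\lambda_V$ with matching framing, the concordance being extracted from the annulus in $S^1\times S^2\times[0,1]$ swept out by the defining isotopy $\widehat{P}\simeq\widehat{\lambda_V}$; satelliting that pattern concordance with $K$ then gives the theorem for every $K$ at once. (The role of the bar is to correct the orientation reversal in the identification of $V^*$ with the complementary solid torus, not to convert a higher-genus cobordism into an annulus.) Your fallback paragraph points toward this structure, but ``gluing the two complementary product cobordisms along their common boundary torus'' is not the construction and does not produce the required annulus; as written, the central step of the proof is still missing.
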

\begin{prop}[Proposition 4.3 of \cite{MP17}]\label{prop:dualcomp}
If patterns $J, P$ are both dualizable then so is $P\circ J$. 
\end{prop}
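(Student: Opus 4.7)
The plan is to show $(P\circ J)^{*}=J^{*}\circ P^{*}$. A concordance-level check is immediate: combining Theorem~\ref{thm:bijective} twice gives
\[
(P\circ J)\bigl(\overline{J^{*}}\circ\overline{P^{*}}(K)\bigr)
=P\bigl(J(\overline{J^{*}}(\overline{P^{*}}(K)))\bigr)
\sim P\bigl(\overline{P^{*}}(K)\bigr)\sim K
\]
and symmetrically on the other side, which is the shape of the dualizable bijectivity relation; however, being dualizable is strictly stronger than having a two-sided concordance inverse, so the real task is to promote this to a structural statement.

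For this I would use the ambient characterization of dualizability implicit in Theorem~\ref{thm:diffeotraces} and Proposition~\ref{prop:dualequiv}: a pattern $Q\subset V_{Q}$ is dualizable with dual $Q^{*}\subset V_{Q^{*}}$ iff there is an orientation-preserving diffeomorphism $\phi_{Q}\colon V_{Q}\setminus\nu(Q)\to V_{Q^{*}}\setminus\nu(Q^{*})$ that interchanges $\partial V_{Q}$ with $\partial\nu(Q^{*})$ and $\partial\nu(Q)$ with $\partial V_{Q^{*}}$, swapping meridian and longitude on each pair. Writing $P\circ J\subset V_{J}$ as the pattern obtained by placing $P\subset V_{P}$ inside $\nu(J)$ via the canonical identification $V_{P}=\nu(J)$, the exterior splits along the incompressible torus $T=\partial\nu(J)=\partial V_{P}$ as
\[
V_{J}\setminus\nu(P\circ J)=\bigl(V_{J}\setminus\nu(J)\bigr)\cup_{T}\bigl(V_{P}\setminus\nu(P)\bigr).
\]
Applying $\phi_{J}$ to the first factor and $\phi_{P}$ to the second and checking that their two identifications of $T$ match yields a single diffeomorphism onto $V_{P^{*}}\setminus\nu(J^{*}\circ P^{*})$; the outermost boundary $\partial V_{J}$ is sent to $\partial\nu(J^{*})$ and the innermost $\partial\nu(P)$ to $\partial V_{P^{*}}$ with the required meridian/longitude swaps, certifying $P\circ J$ as dualizable with dual $J^{*}\circ P^{*}$.

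The main obstacle is the meridian/longitude bookkeeping on the gluing torus $T$: the two dualizing maps must agree there with consistent orientations, and the global meridian/longitude swap on the outermost tori must follow from the composition of the two local swaps. In practice this is cleanest to carry out using the explicit 3-component link descriptions $L_{P}=B_{P}\cup G_{P}\cup R_{P}$ and $L_{J}=B_{J}\cup G_{J}\cup R_{J}$ supplied by Theorem~\ref{thm:diffeotraces}: one substitutes $L_{J}$ into $L_{P}$ by replacing a neighborhood of $B_{P}$ with the linked system from $L_{J}$ along the meridional component $R_{P}$, and performs handle moves to produce a new 3-component link $L_{P\circ J}$ satisfying the hypotheses of Theorem~\ref{thm:diffeotraces} whose two associated knots are $(P\circ J)(U)$ and $(J^{*}\circ P^{*})(U)$; Proposition~\ref{prop:dualequiv} then delivers the dualizability of $P\circ J$ directly without having to track curves across the gluing torus by hand.
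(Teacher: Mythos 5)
The paper does not actually prove this proposition---it is quoted from \cite{MP17}---so there is no in-text argument to compare against; judged on its own, your proof is essentially correct, and it takes the ``structural'' route via exteriors rather than the route suggested by the paper's Definition~\ref{defn:dualizable}. Two caveats. First, everything rests on your asserted equivalence between Definition~\ref{defn:dualizable} and the ambient characterization via the exterior diffeomorphism $\phi_Q$; this is not ``implicit in Theorem~\ref{thm:diffeotraces} and Proposition~\ref{prop:dualequiv}'' as you claim, but is exactly the equivalence with the standard definition that the paper explicitly defers to \cite{MP17}. The forward direction is immediate from the appendix's construction of $V^*=S^1\times S^2\setminus\nu(\widehat P)$, but the converse (a $\phi_Q$ with the stated boundary behaviour implies $\widehat Q$ is isotopic to $\widehat{\lambda_V}$) requires knowing that a knot in $S^1\times S^2$ with solid-torus exterior is isotopic to $S^1\times\{pt\}$, i.e.\ uniqueness of the genus-one Heegaard splitting of $S^1\times S^2$; you should say so. Granting that, your decomposition of $V_J\setminus\nu(P\circ J)$ along $T$ is right, and the compatibility check on $T$ does work out: under the satellite identifications $\mu_J=\mu_{V_P}$, $\lambda_J=\lambda_{V_P}$ and $\lambda_{V_{J^*}}=\lambda_{P^*}$, $\mu_{V_{J^*}}=\mu_{P^*}$, both restrictions induce the same meridian--longitude swap, so they agree up to isotopy of the torus and glue to give $(P\circ J)^*=J^*\circ P^*$. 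Second, be aware that with Definition~\ref{defn:dualizable} there is a shorter direct argument: the isotopy of $\widehat J$ to $\widehat{\lambda_{V_J}}$ carries $\nu(\widehat J)$ to a standardly embedded solid torus, and dualizability of $P$ then finishes the isotopy of $\widehat{P\circ J}$ to the core, the only delicate point being that the framing $\lambda_J$ is carried to the standard longitude. Your closing paragraph about splicing the links $L_J$ into $L_P$ is too vague to serve as a fallback and should be cut.
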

\begin{defin}
For a pattern $P$, define $P_{\#}$ to be the geometric winding number one pattern with $P_{\#}(U)\simeq P(U)$. 
\end{defin}
\begin{rmk}\label{rmk:lightbulbdual}
All geometric winding number one patterns are dualizable. 
\end{rmk}
\begin{figure}[]
\includegraphics[width=6cm]{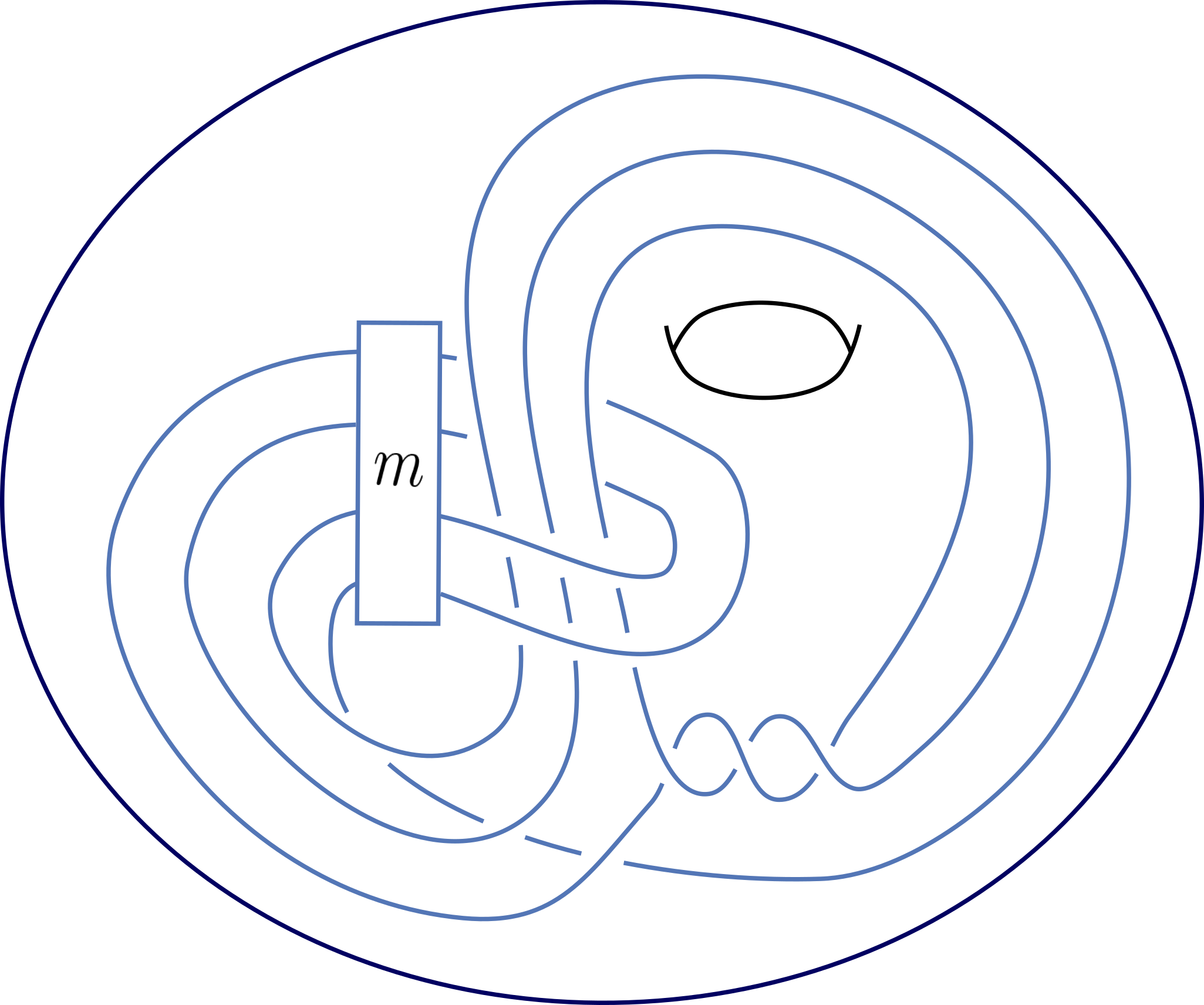}
\caption{}
\label{fig:Pm}
\end{figure}
\begin{proof}[Proof of Corollary \ref{cor:satop}]
Let $K_m$ and $K_m'$ be the knots from the proof of Theorem \ref{Thm:genusgap} and let $P_m, P_m'$ be the dualizable patterns with $P_m(U)\simeq K_m$ and $P_m'(U)\simeq K_m'$ as in Proposition \ref{prop:dualequiv}. The pattern $P_m$ is illustrated in Figure \ref{fig:Pm}. Define the pattern $Q_m:=(P_{m})_{\#}\circ\overline{P_m}$. By Proposition \ref{prop:dualcomp} and Remark \ref{rmk:lightbulbdual} $Q_m$ is dualizable, hence by Theorem \ref{thm:bijective} is bijective on $\sC$, and we see that $Q_m(U)\simeq P_m(U)\#\overline{P_m}(U)$ is slice. We conclude by observing $$Q_m(K_m')\simeq Q_m(P_m'(U))\simeq P_m(U)\#\overline{P_m}(P_m'(U))\sim P_m(U)\simeq K_m$$ where the concordance follows from Theorem \ref{thm:bijective}. 

\end{proof}

\subsection{Satellite homomorphism obstructions}

Satellite operators which induce bijections that fix the identity are a priori candidates for automorphisms of $\sC$. Motivated by studying this for our examples, we prove Theorems \ref{thm:nothom1} and \ref{thm:nothom2}.  

We use the shorthand $nK$ to denote $\#_n(K)$ and define an additive slice genus bound to be any knot invariant $\sigma$ with $\sigma(K)\le g_4(K)$ and $\sigma(J\#K)\ge \sigma(J)+\sigma(K)$ for all knots $K$ and $J$. The classical knot signature, Tristam-Levine signatures, J. Rasmussen's $s$ invariant, Oszvath-Szabo's $\tau$ invariant and many other concordance invariants from the $HFK^{\infty}$ package all give examples of additive slice genus bounds. The slice genus is \emph{not} an additive slice genus bound.

We will require 
\begin{prop}[Proposition 6.3 of \cite{CH14}]\label{prop:bdddistance}
For any winding number $w$ satellite operators $P$ and $J$ there is a constant $C_{P,J}$ such that for all knots $K$ $$|g_4(J(K))-g_4(P(K))|\le C_{P,J}$$
\end{prop}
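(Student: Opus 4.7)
The plan is to build, for any fixed pair of winding number $w$ patterns $P$ and $J$, a single abstract cobordism $\Sigma\subset V\times[0,1]$ between $P$ and $J$ whose genus $g_0$ depends only on $P$ and $J$. Applying the canonical embedding $i_K:V\into S^3$ will transport $\Sigma$ to a cobordism between $P(K)$ and $J(K)$ in $S^3\times[0,1]$ of the same genus, and the proposition will then follow by concatenating with a slice surface in $B^4$.

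First I would observe that, since both $P$ and $J$ have winding number $w$, the 1-cycle $(P\times\{0\})\sqcup(-J\times\{1\})$ is null-homologous in $V\times[0,1]$ (which deformation retracts onto $V$, where $[P]=[J]=w[\lambda_V]$). Standard four-dimensional topology then supplies a properly embedded oriented surface in $V\times[0,1]$ with this boundary; after tubing components together if needed I may take $\Sigma$ to be connected, with some finite genus $g_0=g_0(P,J)$.

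Next, given any knot $K\subset S^3$, the product embedding $i_K\times\mathrm{id}:V\times[0,1]\into S^3\times[0,1]$ sends $\Sigma$ to a smoothly embedded, connected, oriented surface $\Sigma_K$ of genus $g_0$ with $\partial\Sigma_K=P(K)\sqcup(-J(K))$. Concatenating $\Sigma_K$ with a minimal genus slice surface for $P(K)$ along the shared boundary circle $P(K)$ produces a slice surface for $J(K)$ in $B^4$ whose genus is exactly $g_4(P(K))+g_0$ by a short Euler characteristic count. Thus $g_4(J(K))\le g_4(P(K))+g_0$; the reverse inequality follows by symmetry, so one may take $C_{P,J}:=g_0$.

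I expect no substantive obstacle beyond routine bookkeeping: verifying the existence of a connected $\Sigma$ (immediate from the homological vanishing together with tubing), and confirming the Euler characteristic calculation for the glued surface. Crucially, neither step involves $K$, so the resulting constant $g_0$ is genuinely independent of the input knot.
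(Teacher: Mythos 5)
Your argument is correct and follows the same route as the paper (and the cited source): produce a single genus-$g_0$ cobordism between the patterns inside $V\times[0,1]$, push it into $S^3\times[0,1]$ via $i_K\times\mathrm{id}$, and cap with a minimal slice surface, so that $C_{P,J}=g_0$ is independent of $K$. The homological existence argument, the tubing to achieve connectedness, and the Euler characteristic count are all sound.
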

The proposition follows from observing that in $S^1\times D^2\times [0,1]$ there exists some genus $g$ cobordism between $P$ and $J$, and taking $C_{P,J}:=g$. For details, see \cite{CH14}. 

\begin{proof}[Proof of Theorem \ref{thm:nothom1}]
Suppose $P$ does induce a homomorphism. Then for all $n\in\bZ^+$ we have $$g_4(P(nK))=g_4(nP(K))\ge \sigma(nP(K))\ge n\sigma(P(K))\ge n w g_4(K)+nr$$ for some positive $r$ which is independent from $n$. 

By Proposition \ref{prop:bdddistance} there exists a constant $C_{P,J}$ such that $g_4(P(nK))\le C_{P,J}+g_4(J(nK))$ where $J$ is the $(w,1)$ cable operator. But we also have
$$C_{P,J}+g_4(J(nK))\le C_{P,J}+wg_4(nK)\le C_{P,J}+wng_4(K) $$
Hence $$nr+|w|ng_4(K)\le C_{P,J}+|w|ng_4(K)$$
By taking $n$ large we get a contradiction.
\end{proof}

\begin{thm}\label{thm:nothom2}
Suppose $P$ has winding number $w$ and that there exists a knot $K$, a winding number $w$ pattern $J$, an additive slice genus bound $\sigma$, and a slice genus bound $\overline{\sigma}$ such that
\begin{enumerate}
\item $\sigma (P(K))> g_4(K)$
\item $\overline{\sigma}(J(K_0))\ge \sigma(K_0)$ for all knots $K_0$. 
\end{enumerate}
Then $P$ is not a homomorphism. 
\end{thm}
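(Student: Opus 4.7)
The plan is to argue by contradiction in the spirit of the proof of Theorem~\ref{thm:nothom1}, with hypothesis (2) replacing the direct comparison to the $(w,1)$-cable.

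The first step is to extract from hypothesis (2) and Proposition~\ref{prop:bdddistance} the uniform lower bound
\[
g_4(P(K_0)) \;\ge\; g_4(J(K_0)) - C_{P,J} \;\ge\; \overline{\sigma}(J(K_0)) - C_{P,J} \;\ge\; \sigma(K_0) - C_{P,J}
\]
valid for every knot $K_0$, where $C_{P,J}$ is the constant supplied by Proposition~\ref{prop:bdddistance}. Informally, $g_4 \circ P \ge \sigma - C_{P,J}$; this is meant to play the role of the cable upper bound $g_4(C_{w,1}(K_0)) \le w g_4(K_0)$ used in the proof of Theorem~\ref{thm:nothom1}, but here as a pointwise lower bound on $g_4 \circ P$.

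Suppose for contradiction that $P$ is a homomorphism, and set $r := \sigma(P(K)) - g_4(K) > 0$ by hypothesis (1). Since $P(nK) \sim nP(K)$, additivity of $\sigma$ and hypothesis (1) give
\[
g_4(nP(K)) = g_4(P(nK)) \;\ge\; \sigma(nP(K)) \;\ge\; n\sigma(P(K)) \;=\; n(g_4(K) + r).
\]
The final step is to compare this with an upper bound on $g_4(nP(K))$ coming from Proposition~\ref{prop:bdddistance}. For $w = 1$, taking the identity pattern as the comparison target gives $g_4(P(nK)) \le ng_4(K) + C$, whence $nr \le C$, a contradiction for large $n$; this recovers the $w=1$ case of Theorem~\ref{thm:nothom1}.

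The main obstacle will be the case $w \ge 2$: the $(w,1)$-cable only furnishes $g_4(P(nK)) \le wng_4(K) + O(1)$, which matches the homomorphism-driven lower bound only when $r > (w-1)g_4(K)$ (the Theorem~\ref{thm:nothom1} hypothesis, strictly stronger than (1)). Closing the gap under only the weaker (1) will, I expect, require iterating the key bound of Step~1 along the sequence $K, P(K), P^2(K), \ldots$: applying Step~1 at $K_0 = nP^{k-1}(K)$ and using the homomorphism identity $P(nP^{k-1}(K)) \sim nP^k(K)$, one gets $g_4(nP^k(K)) \ge n\sigma(P^{k-1}(K)) - C_{P,J}$, and hypothesis (2) should be used at each iteration to propagate the lower bound forward so that for some $k$ the stable slope of $g_4(nP^k(K))$ in $n$ outstrips the $(w^k,1)$-cable upper bound. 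Organizing the iteration so that the accumulated error constants remain bounded while the lower bound strictly exceeds the upper is the principal technical point I anticipate.
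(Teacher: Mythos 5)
Your Step 1 is exactly the engine of the paper's argument: combining Proposition \ref{prop:bdddistance} with hypothesis (2) gives $g_4(P(K_0))\ge \overline{\sigma}(J(K_0))-C_{P,J}\ge\sigma(K_0)-C_{P,J}$ for every knot $K_0$. But you then aim it at the wrong inequality, because hypothesis (1) as printed is almost certainly a misprint. The paper's own proof uses (1) in the form $\sigma(K)>g_4(P(K))$ (its first display is $\sigma(nK)\ge n\sigma(K)\ge nr+ng_4(P(K))$, which requires $\sigma(K)\ge g_4(P(K))+r$), and the remark after the proof --- that Theorems \ref{thm:nothom1} and \ref{thm:nothom2} together force a homomorphism to satisfy $g_4(K)\le g_4(P(K))\le |w|g_4(K)$ --- only makes sense if Theorem \ref{thm:nothom2} obstructs genus \emph{lowering}. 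With the corrected hypothesis your Step 1 finishes the proof in one line, for every $w$: apply it to $K_0=nK$ and use the homomorphism assumption to get $n\sigma(K)-C_{P,J}\le \sigma(nK)-C_{P,J}\le g_4(P(nK))=g_4(nP(K))\le ng_4(P(K))$, hence $\sigma(K)\le g_4(P(K))+C_{P,J}/n$, contradicting $\sigma(K)>g_4(P(K))$ once $n$ is large.

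As an attempt on the statement as literally printed, the proposal has a genuine gap. The case you complete, $w=\pm1$, is just Theorem \ref{thm:nothom1} and never uses hypothesis (2). For $|w|\ge 2$ the sketched iteration cannot close: what you can propagate is a lower bound on $g_4(P(K_0))$, whereas feeding the next stage through superadditivity of $\sigma$ requires a lower bound on $\sigma(P^{k-1}(K))$, and $\sigma\le g_4$ points the wrong way; hypothesis (1) supplies $\sigma(P(K))>g_4(K)$ only for the single knot $K$, not for $P(K), P^2(K),\dots$. More fundamentally, nothing in the paper's toolkit excludes a homomorphism with $g_4(K)<\sigma(P(K))\le |w|\,g_4(K)$ (the cable comparison only caps $g_4(P(nK))$ at $|w|n g_4(K)+O(1)$), so the printed version for $|w|\ge2$ is not provable by these methods; it is the reversed inequality that the paper actually proves.
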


\begin{rmk}
(2) is a technical condition. When $w=1$, we always have (2) by taking $\overline{\sigma}=\sigma$ and $J$ to be the identity pattern. When $w=0$ we never have (2), which is consistent since the trivial pattern is a winding number 0 homomorphism which  lowers slice genus of infinitely many $K$. If $|w|>1$ and $\sigma$ behaves sufficiently well on cables then we have (2) by taking $J$ an appropriate cable and $\overline{\sigma}=\sigma$. For example when $\sigma\in\{\tau, s\}$ taking $J$ to be the $(w,1)$ cable works. 
\end{rmk}
\begin{proof}
Suppose $P$ is a homomorphism. By hypothesis we have $$\sigma(nK)\ge n\sigma(K)\ge nr+ng_4(P(K))$$ for some $r>0$ and all $n\ge 0$. By Proposition \ref{prop:bdddistance} there exists a constant $C_{P,J}$ such that $$g_4(J(nK))\le C_{P,J}+g_4(P(nK))=C_{P,J}+g_4(nP(K))\le C_{P,J}+ng_4(P(K))$$ where $J$ is the $(w,1)$ cable operator. Since by hypothesis $$g_4(J(nK))\ge\overline{\sigma}(J(nK))\ge\sigma(nK),$$ we get $$C_{P,J}+ng_4(P(K))\ge nr+ng_4(P(K)).$$ By taking $n$ large, we get a contradiction.
\end{proof}

Taken together, Theorems \ref{thm:nothom1} and \ref{thm:nothom2} indicate loosely that any winding number $w\neq 0$ satellite homomorphism $P$ must have $g_4(K)\le g_4(P(K))\le |w|g_4(K)$ for all knots $K$, at least insofar as can be detected by any additive slice genus bound. 

We conclude by proving Theorem \ref{thm:stablegenus0} as a final application of these ideas.
\begin{defin}[See \cite{Liv10}]
The stable four genus of a knot $K$ is defined to be $$g_{st}(K):=\lim_{n\to\infty}\frac{g_4(nK)}{n}$$
\end{defin}
It is not known whether there exist any non-amphichiral knots $K$ with $g_{st}(K)=0$ or whether $g_{st}(K)=0$ implies that $K$ is torsion on $\sC$ \cite{Liv10}. It is interesting then that the existence of certain satellite homomorphisms gives rise to abundant examples of knots with stable four genus 0 as follows.
\begin{proof}[Proof of Theorem \ref{thm:stablegenus0}]
Suppose $P$ and $J$ are any winding number $w$ satellite homomorphisms. By Proposition \ref{prop:bdddistance} there is some constant $C_{P,J}$ with $g_4(P(nK)-J(nK))\le C_{P,J}$ for all knots $K$ and integers $n$. Since $P$ and $J$ are homomorphisms $P(nK)-J(nK)\sim nP(K)-nJ(K)\simeq n(P(K)-J(K))$. Hence $g_4(n(P(K)-J(K)))\le C_{P,J}$, so $P(K)-J(K)$ has stable genus 0. The conclusions follow by observing that the identity and zero maps arise as winding number 1 and 0 satellite homomorphisms, respectively.
\end{proof}

\bibliography{satellite}

\section{Appendix}\label{sec:appen}
Herein we define dualizable patterns, which are the fundamental object used in the original construction of knots with diffeomorphic traces, and prove Proposition \ref{prop:dualequiv}. The definition of dualizable patterns was inspired by examples  of Akbulut \cite{Akb77} and was developed and formalized in work of Lickorish \cite{Lic79} and Brakes \cite{Bra80} independently at around the same time. Several recent papers on the subject of constructing knots with diffeomorphic traces, including one by the author, have erroneously failed to reference Lickorish's work. 

To define dualizable patterns, we fix some conventions. For a pattern $P\in S^1\times D^2=:V$ define $\mu_P$ to be a  meridian for $P$, oriented such that the linking number of $P$ and $\mu_P$ is 1, and define $\mu_V=\{x_1\}\times \partial D^2$ for some $x_1\in S^1$, oriented so that $\mu_V$ is homologous to a non-negative multiple of $\mu_P$. Finally define the longitude $\lambda_P$ of $P$ to be the unique framing curve of $P$ in $V$ which is homologous to a positive multiple of $\lambda_V$ in $V\smallsetminus \nu(P)$. 

\begin{defin}
Define $\Gamma:S^1\times D^2 \to S^1\times S^2$ by $\Gamma(t,d)=(t, \gamma(d))$, where   $\gamma:D^2\to S^2$ is an arbitrary orientation preserving embedding.
 Then for any curve $\alpha: S^1 \to S^1 \times D^2$, we can define a knot in $S^1\times S^2$ by
$\widehat{\alpha} :=\Gamma \circ \alpha:S^1\to S^1\times S^2.$
 \end{defin}

We warn the reader that Definition \ref{defn:dualizable} is somewhat non-standard; for the equivalence to the standard definition see \cite{MP17}.

 \begin{defin}\label{defn:dualizable}
 A pattern $P$ in a solid torus $V$ is dualizable if and only if $\widehat{P}$ is isotopic to $\widehat{\lambda_V}$ in $S^1 \times S^2$. 
 \end{defin}
 
 Since for a $P$ dualizable pattern $\widehat{P}$ is isotopic to $\widehat{\lambda_V}$ in $S^1\times S^2$ we have that $S^1\times S^2\setminus \nu(\widehat{P})=:V^*$ is a solid torus. By defining $\lambda_{V^*}$ to be the image of $\widehat{\lambda_P}$ in $S^1\times S^2\setminus \nu(\widehat{P})$ we equip $V^*$ with a natural parametrization. As such we can make the following definition.
 
\begin{defin}
Define $P^*$ to be $\widehat{\lambda_V}$ restricted to $S^1\times S^2\setminus \nu(\widehat{P})=V^*$ 
\end{defin}

\begin{figure}
\includegraphics[width=9cm]{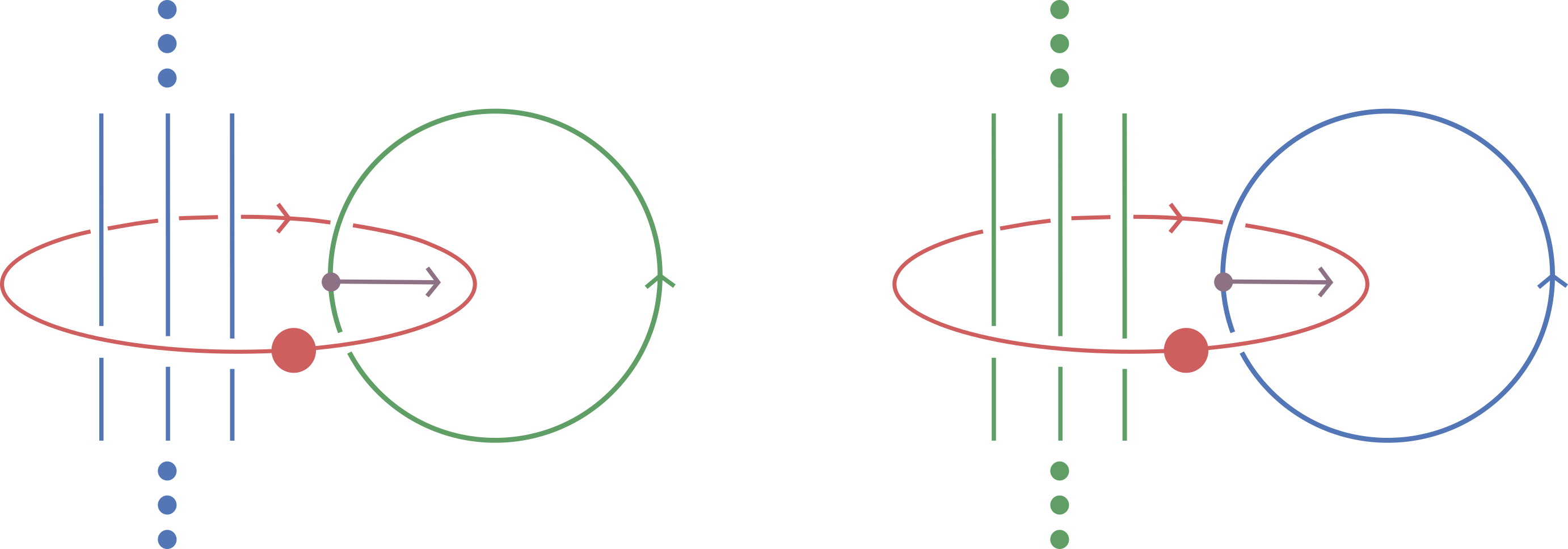}
\caption{}
\label{fig:framingcorrect}
\end{figure}

\vspace{-8pt}
\begin{proof}[Proof of Proposition \ref{prop:dualequiv}]

We begin by proving the first assertion. Suppose $X$ satisfies the hypotheses of Theorem \ref{thm:diffeotraces}, and let $L$ be the decorated link which presents the handle decomposition in the statement of Theorem \ref{thm:diffeotraces}. Let $\ell$ denote the diffeomorphism of $X$ described in the proof of Theorem \ref{thm:diffeotraces} obtained from sliding $B$ over $G$, then canceling $G$ and $R$ to obtain $X_0(K)$, and let $\ell_*$ denote the analogous diffeomorphism used to obtain $X_0(K')$.

We will consider two other natural handle decompositions of $X$, described by decorated links $J$ and $J_*$ respectively. To obtain $J$ from $L$, isotope $L$ so that $R$ has no self crossings in the diagram and so that $G\cup D_R$ is a single point. Then slide $G$ under the one handle (across $R$) as needed until $G$ has no self crossings in the diagram. Let $r$ denote the number of slides this required, counted with sign. Then slide G across $R$ $(-r)$-times as indicated in the left hand side of Figure \ref{fig:framingcorrect}. At this point the framing on $G$ is 0 and $lk(G,B)=0$, but perhaps $B\cap D_G\neq\emptyset$. If this is the case, slide B across $R$ until $B\cap D_G=\emptyset$. Define $J$ to be the decorated link associated to the handle decomposition at this point and define $f$ to be the diffeomorphism of $X$  just described. 

The decorated link $J_*$ and diffeomorphism $f_*$ are defined in the same way, but with the roles of $B$ and $G$ reversed. 

Considering now the link $B\cup G$ in the boundary of the one-handlebody in the handle description $J$, we see that both $B$ and $G$ are isotopic to $S^1\times \{pt\}$ (to see this for $B$ consider its image under $f_{*}\circ f^{-1}$). As such $B\in S^1\times S^2\setminus\nu(G)$ gives a dualizable pattern which we'll call $P$, and $G\in S^1\times S^2\setminus\nu(B)$ it's associated dual pattern $P^*$. Since we can cancel $G$ and $R$ in $J$, we see that $X$ is diffeomorphic to $X_0(P(U))$; let $g$ denote this handle canceling diffeomorphism. Similarly let $g_*$ denote the canceling diffeomorphism from $J_*$ to $X_0(P^*(U))$. 

Now we claim that $K\simeq P(U)$. To see this, observe that the orientation preserving diffeomorphism $g\circ f\circ \ell^{-1}|_\partial:S^3_0(K)\to S^3_0(P(U))$ sends the surgery dual for $K$ to the surgery dual for $P(U)$, preserving the framings corresponding to the respective $S^3$ surgeries. By performing these $S^3$ surgeries, $g\circ f\circ \ell^{-1}|_\partial$ yields a map $\Phi:S^3\to S^3$ taking $K$ to $P(U)$, hence $K\simeq P(U)$. That $K'\simeq P^*(U)$ follows similarly. 

The proof of the second statement is similar. Given a dualizable pattern $P$ with dual pattern $P^*$, one can write down decorated link $J$ and diffeomorphism $g$ as in the proof of the first statement. From $J$ we can obtain another handle decomposition of $X:=X_0(P(U))$; slide $B$ across $R$ until $B$ is isotopic to $S^1\times\{pt\}$. If this sequence of slides required $r$ slides, counted with sign, then perform another $-r$ slides of $B$ over $R$, as in the right hand side of Figure \ref{fig:framingcorrect}. Define $L$ to be the decorated link associated to the handle decomposition at this point and define $h$ to be the diffeomorphism of $X$ just described. 

Observe that $L$ satisfies the hypothesis of Theorem \ref{thm:diffeotraces}, and as before let $\ell$ denote the diffeomorphism of $X$ obtained from sliding $B$ over $G$, then canceling $G$ and $R$ to obtain $X_0(K)$, and let $\ell_*$ denote the analogous diffeomorphism used to obtain $X_0(K')$. 

From $L$ we use the diffeomorphism $f_*$ from before to define another handle diagram $J_*$ of $X$. By the definition of $P^*$ the diffeomorphism $g_*$ from before gives a diffeomorphism from $X$ to $X_0(P*(U))$. We then check that $K\simeq P(U)$ and $K'\simeq P^*(U)$ as in the proof of the first statement.

\end{proof}
\end{document}